\newtheorem{theorem}{Theorem}[section]
\newtheorem{corollary}[theorem]{Corollary}
\newtheorem{lemma}[theorem]{Lemma}
\newtheorem{proposition}[theorem]{Proposition}
\theoremstyle{definition}
\newtheorem{definition}[theorem]{Definition}
\newtheorem{remark}[theorem]{Remark}
\newtheorem{example}[theorem]{Example}
\theoremstyle{remark}
\renewcommand{\theclaim}{\textup{\theclaim}}
\newtheorem*{acknowledgements}{Acknowledgements}
\numberwithin{equation}{section}
\def\openone
\newbox\ipbox
\newcommand{\ip}[2]{\left\langle #1\, , \,#2\right\rangle}
\newcommand{\diracb}[1]{\left\langle #1\mathrel{\mathchoice

{\setbox\ipbox=\hbox{$\displaystyle \left\langle\mathstrut
#1\right.$}

\vrule height\ht\ipbox width0.25pt depth\dp\ipbox}

{\setbox\ipbox=\hbox{$\textstyle \left\langle\mathstrut
#1\right.$}

\vrule height\ht\ipbox width0.25pt depth\dp\ipbox}

{\setbox\ipbox=\hbox{$\scriptstyle \left\langle\mathstrut
#1\right.$}

\vrule height\ht\ipbox width0.25pt depth\dp\ipbox}

{\setbox\ipbox=\hbox{$\scriptscriptstyle \left\langle\mathstrut
#1\right.$}

\vrule height\ht\ipbox width0.25pt depth\dp\ipbox}

}\right. }
\newcommand{\dirack}[1]{\left. \mathrel{\mathchoice

{\setbox\ipbox=\hbox{$\displaystyle \left.\mathstrut
#1\right\rangle$}

\vrule height\ht\ipbox width0.25pt depth\dp\ipbox}

{\setbox\ipbox=\hbox{$\textstyle \left.\mathstrut
#1\right\rangle$}

\vrule height\ht\ipbox width0.25pt depth\dp\ipbox}

{\setbox\ipbox=\hbox{$\scriptstyle \left.\mathstrut
#1\right\rangle$}

\vrule height\ht\ipbox width0.25pt depth\dp\ipbox}

{\setbox\ipbox=\hbox{$\scriptscriptstyle \left.\mathstrut
#1\right\rangle$}

\vrule height\ht\ipbox width0.25pt depth\dp\ipbox}

} #1\right\rangle}
\newcommand{\beq}{\begin{equation}}
\newcommand{\eeq}{\end{equation}}
\newcommand{\cj}[1]{\overline{#1}}
\newcommand{\bz}{\mathbb{Z}}
\newcommand{\bc}{\mathbb{C}}
\newcommand{\bn}{\mathbb{N}}
\def\blfootnote{\xdef\@thefnmark{}\@footnotetext}
\newcommand{\suport}{\textup{supp}}
\renewcommand{\mod}{\operatorname{mod}}
\def\-{^{-1}}
\def\ty{\emptyset}
\begin{document}

\title[On generalized Walsh bases]{On generalized Walsh bases}
\author{Dorin Ervin Dutkay}

\address{[Dorin Ervin Dutkay] University of Central Florida\\
	Department of Mathematics\\
	4000 Central Florida Blvd.\\
	P.O. Box 161364\\
	Orlando, FL 32816-1364\\
U.S.A.\\} \email{Dorin.Dutkay@ucf.edu}

\author{Gabriel Picioroaga}
\address{
[Gabriel Picioroaga] University of South Dakota\\
Department of Mathematical Sciences\\
414 E. Clark Street\\
Vermillion, SD, 57069\\
U.S.A\\}\email{Gabriel.Picioroaga@usd.edu}

\author{Sergei Silvestrov}
\address{[Sergei Silvestrov] Division of Applied Mathematics\\
The School of Education, Culture and Communication (UKK)\\
M\" alardalen University\\
Box 883, 721 23 V\" aster\aa s, Sweden} \email{sergei.silvestrov@mdh.se}

\thanks{}
\subjclass[2000]{}
\keywords{Cuntz algebras, Walsh basis, Hadamard matrix, uncertainty principle}

\begin{abstract} This paper continues the study of orthonormal bases (ONB) of $L^2[0,1]$ introduced in \cite{DPS14} by means of Cuntz algebra $\mathcal{O}_N$ representations on $L^2[0,1]$. For $N=2$, one obtains the classic Walsh system. We show that the ONB property holds precisely because the $\mathcal{O}_N$ representations are irreducible. We prove an uncertainty principle related to these bases. As an application to discrete signal processing we find a fast generalized transform and compare this generalized transform with the classic one with respect to compression and sparse signal recovery. 

\end{abstract}
\maketitle \tableofcontents
\section{Introduction}

The Walsh  functions form an orthonormal basis (ONB), for the Hilbert space $L^2[0,1]$,  that can be interpreted roughly as the discrete analog of classic sines and cosines. For some applications, the Walsh functions have several advantages: for example they take only the values $\pm 1$ on sub-intervals defined by dyadic fractions, thus making the computation of coefficients much easier. The Walsh functions  are connected to probability, e.g., the Walsh expansion can be seen as conditional expectation, and the partial sums form a Doob martingale.
The Walsh functions have found a wide range of applications: for example in modern communications systems (through the so-called Hadamard matrices, to recover information in the presence of noise and interference), and signal processing (reconstruction of signals by means of dyadic sampling theorems), see e.g. \cite{ARa}, \cite{Corr}, \cite{Hart}, \cite{Yuen} to mention a few. 

There are certain features that make this ONB more desirable to work with than for example the Fourier system: the Walsh series associated to $f\in L^1[0,1]$ converge pointwise a.e. to $f$. This is also true for $f$ with bounded variation at a continuity point of $f$, e.g. see \cite{Walsh}, \cite{Paley}, \cite{Nagy}.

Various generalizations have been given, based on changing the $L^2$ space \cite{Vil}, or changing the Rademacher functions \cite{Chr}. For example, for the dyadic group $G$ the Walsh functions can be viewed as characters on $G$, or more generally starting with \cite{Vil}, as characters of a zero-dimensional, separable group. The  generalized Walsh system based on $N$-adic numbers and exponentials functions in \cite{Chr} has been used to construct algorithms for polynomial lattices (a particular kind of digital net which in turn can be used in sampling methods for multivariate integration), see \cite{Dik1}, \cite{Dik2} and references therein.
\par In \cite{DPS14} a criteria is given to obtain ONBs from Cuntz algebra representations. These bases were obtained through a general principle which incorporates wavelets and an assortment of various ONBs, from the classic Fourier and Walsh bases, to bases on fractals (Cantor sets). This principle is based on the Cuntz relations and roughly asserts that a suitable representation of these relations gives rise to a ONB. By tinkering with the isometries satisfying the relations one obtains the above mentioned variety of ONBs. One of the examples recovers the classic Walsh ONB, and another generalizes it (the ONB from \cite{Chr} is also recovered by starting with a $N\times N$ Hadamard matrix in the construction below). It is this generalized ONB that we continue to study in this paper. We mention that properties such as convergence, continuity, periodicity etc. are discussed in \cite{DuPi} and \cite{NHa}. 
\par  In Section \ref{s2} we show that the ONB property of the generalized Walsh system is equivalent with the irreducibility of the  Cuntz algebra representation from which it is built. In Section \ref{s3} we analyze the restriction of the generalized Walsh bases to finite-dimensional spaces. We show that the signal's coefficients with respect to these  bases can be easily read off of a tensor matrix. We also provide a change of generalized Walsh bases formula. From these finite-dimensional considerations, in Section \ref{fast} we provide an algorithm for a fast generalized Walsh transform, generalizing ideas from \cite{LeKa86}. The fast transform was implemented in Maple and used  in all our examples from section \ref{s6} to compare the performances of Walsh and generalized Walsh transforms from a statistical point of view. To this end we use a simple compression scheme based on the variance criterion. In Section \ref{incert}, inspired by a discrete version of the uncertainty principle \cite{DoSt} we  show that the Walsh and generalized Walsh transforms satisfy such a principle albeit a  discrepancy which occurs when the matrix giving rise to the generalized Walsh basis is not Hadamard. We further develop a concept of uncertainty with respect to unitary matrices and prove a few properties. 
\par We recall next the construction of the generalized Walsh basis and the Cuntz algebra representation it arises from.

Let $A=(a_{ij})_{i,j=0,\dots,N-1}$ be an $N\times N$ unitary matrix with constant first row, i.e., $a_{0j}=\frac{1}{\sqrt N}$ for all $j=0,\dots,N-1$.

Define the functions (almost everywhere with respect to the Lebesgue measure):
\begin{equation}
m_i(x)=\sqrt N\sum_{j=0}^{N-1}a_{ij}\chi_{[j/N,(j+1)/N)}(x),\quad(x\in [0,1],i=0,\dots,N-1),
\label{eq1.1}
\end{equation}
where $\chi_A$ is the characteristic function of the set $A$.

Note that $m_0\equiv 1$.

On $L^2[0,1]$ define the operators
\begin{equation}
S_if(x)=m_i(x)f(Nx\mod 1)\quad (x\in [0,1],f\in L^2[0,1], i=0,\dots,N-1).
\label{eq1.2}
\end{equation}
Note that $S_01=1$.

\begin{theorem}\label{thcuntz}\cite{DPS14}
The operators $(S_i)_{i=0,1,\dots,N-1}$ form a representation of the Cuntz algebra $\mathcal O_N$ on  $L^2[0,1]$, i.e.,
\begin{equation}
S_i^*S_j=\delta_{i,j}I_{L^2[0,1]},\quad(i,j\in\{0,1,\dots,N-1\}), \quad \sum_{i=0}^{N-1}S_iS_i^*=I_{L^2[0,1]}.
\label{eqcuntz}
\end{equation}

\end{theorem}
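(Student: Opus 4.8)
The plan is to verify the Cuntz relations \eqref{eqcuntz} directly from the definitions \eqref{eq1.1} and \eqref{eq1.2}. First I would compute the adjoint $S_i^*$ explicitly. Since $S_if(x) = m_i(x)f(Nx \bmod 1)$, a change-of-variables computation on each dyadic-type subinterval $[k/N,(k+1)/N)$ shows that for $g \in L^2[0,1]$,
\begin{equation}
S_i^*g(x) = \frac{1}{N}\sum_{k=0}^{N-1}\cj{m_i}\!\left(\frac{x+k}{N}\right) g\!\left(\frac{x+k}{N}\right)\qquad(x \in [0,1]).
\label{eqadjoint}
\end{equation}
The key point is that $m_i$ is constant on each subinterval $[j/N,(j+1)/N)$ with value $\sqrt{N}\,a_{ij}$, so $\cj{m_i}((x+k)/N) = \sqrt{N}\,\cj{a_{ik}}$ for all $x\in[0,1)$. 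I would want to check \eqref{eqadjoint} carefully by testing it against inner products $\langle S_if, g\rangle = \langle f, S_i^*g\rangle$, splitting the integral $\int_0^1 m_i(x)f(Nx\bmod 1)\cj{g(x)}\,dx$ over the $N$ subintervals and substituting $u = Nx - k$ on the $k$-th piece.

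Next I would establish the isometry-type relation $S_i^*S_j = \delta_{ij}I$. Using \eqref{eqadjoint} together with the fact that $S_jf(x) = \sqrt{N}\,a_{jk}f(Nx\bmod 1)$ on $[k/N,(k+1)/N)$, one computes
\begin{equation}
S_i^*S_jf(x) = \frac{1}{N}\sum_{k=0}^{N-1}\sqrt{N}\,\cj{a_{ik}}\cdot\sqrt{N}\,a_{jk}\,f(x) = \left(\sum_{k=0}^{N-1}\cj{a_{ik}}a_{jk}\right)f(x),
\label{eqortho}
\end{equation}
and the bracketed sum is exactly the inner product of row $i$ and row $j$ of the unitary matrix $A$, hence equals $\delta_{ij}$. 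For the completeness relation $\sum_i S_iS_i^* = I$, I would first compute $S_iS_i^*g(x)$ for $x\in[j/N,(j+1)/N)$: applying $S_i$ to \eqref{eqadjoint} gives $S_iS_i^*g(x) = m_i(x)\cdot\frac1N\sum_k \cj{m_i}((y+k)/N)g((y+k)/N)$ where $y = Nx\bmod 1$, which simplifies to $|a_{ij}|^2$ times an averaging expression; summing over $i$ and using that the columns of $A$ are also orthonormal (so $\sum_i \cj{a_{ij}}a_{ij'} = \delta_{jj'}$) collapses the sum back to $g(x)$.

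I expect the main obstacle to be the bookkeeping in the change of variables for the adjoint formula \eqref{eqadjoint} — keeping track of which subinterval maps where under $x\mapsto Nx\bmod 1$, and making sure the Jacobian factor $1/N$ and the $\sqrt{N}$ normalization in \eqref{eq1.1} combine correctly. Once \eqref{eqadjoint} is pinned down, the algebra in \eqref{eqortho} and the completeness check are essentially restatements of the row- and column-orthonormality of $A$, so they should go through routinely. An alternative, cleaner route would be to identify $L^2[0,1]$ with $\bigoplus_{j=0}^{N-1}L^2[0,1]$ via the dyadic decomposition $f\mapsto (f|_{[j/N,(j+1)/N)}(\,\cdot\,/N + j/N))_j$ and check that under this identification $S_i$ becomes the operator $f\mapsto (\sqrt{N}\,a_{ij}f)_j$ composed with the canonical unitary, reducing the Cuntz relations to matrix identities for $A$; I would mention this as the conceptual reason the statement holds.
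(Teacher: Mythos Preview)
Your proposal is correct and is exactly the standard direct verification: compute $S_i^*$ via change of variables, then reduce the two Cuntz relations to the row- and column-orthonormality of $A$, respectively. The paper itself does not include a proof of this theorem---it simply quotes the result from \cite{DPS14}---so there is nothing to compare against here; your argument (including the alternative direct-sum viewpoint you sketch at the end) is precisely how one proves it.
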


\begin{theorem}\label{thdps}\cite{DPS14}
The family
\begin{equation}
\mathcal W^A:=\left\{ S_{w_1}\dots S_{w_p}1 : p\in \bn, w_1,\dots, w_n\in\{0,\dots,N-1\}\right\}
\label{eq1.3}
\end{equation}
is an orthonormal basis for $L^2[0,1]$ (discarding of course the repetitions generated by the fact that $S_01=1$). We call it the generalized Walsh basis that corresponds to the matrix $A$.
\end{theorem}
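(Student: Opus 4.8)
The plan is to establish orthonormality and completeness of $\mathcal W^A$ almost entirely from the Cuntz relations \eqref{eqcuntz} of Theorem~\ref{thcuntz} together with the single fact $S_0 1=1$, reducing completeness to a dimension count. First I would record the explicit form of a basis vector: iterating \eqref{eq1.2} gives
\begin{equation*}
\bigl(S_{w_1}\cdots S_{w_p}1\bigr)(x)=\prod_{j=1}^{p} m_{w_j}\bigl(N^{j-1}x\mod 1\bigr),
\end{equation*}
so each $S_{w_1}\cdots S_{w_p}1$ is constant on every $N$-adic interval $[k/N^p,(k+1)/N^p)$; write $V_p$ for the $N^p$-dimensional space of such step functions. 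Since $m_0\equiv 1$, we have $S_0 1=1$, so appending zeros at the end of a word does not change the vector; call a word \emph{reduced} if it is empty or its last letter is nonzero, and note that every word of length $p$ reduces to a unique reduced word of length $\le p$, and that this reduction map from $\{0,\dots,N-1\}^p$ to reduced words of length $\le p$ is in fact a bijection.

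For orthonormality I would compare two reduced words $w=(w_1,\dots,w_p)$ and $v=(v_1,\dots,v_q)$, say with $p\le q$, and peel letters off using $S_i^*S_j=\delta_{ij}I$. If $w$ and $v$ first disagree at index $i$, peeling the matching initial segment leaves a factor $S_{w_i}^*S_{v_i}=0$, so the inner product is $0$. If $w$ is an initial segment of $v$, peeling $p$ letters leaves $\langle 1,S_{v_{p+1}}\cdots S_{v_q}1\rangle=\langle S_0^{q-p}1, S_{v_{p+1}}\cdots S_{v_q}1\rangle$, which vanishes unless $v_{p+1}=\dots=v_q=0$, impossible since $v$ is reduced with $p<q$; and if $w=v$ the same peeling yields $\langle 1,1\rangle=1$. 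Thus distinct reduced words give orthonormal vectors. The one delicate point here is precisely this dichotomy — one must be sure that the \emph{only} identifications among the vectors $S_{w_1}\cdots S_{w_p}1$ are those forced by trailing zeros — and this is exactly what the reduced-word bookkeeping together with $S_0 1=1$ secures.

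Completeness then comes for free from counting. The number of reduced words of length $\le p$ equals $1+\sum_{k=1}^{p}(N-1)N^{k-1}=N^p$; by the previous step these $N^p$ vectors $S_{w_1}\cdots S_{w_p}1$ are orthonormal, and by the product formula they all lie in $V_p$, a space of dimension $N^p$. Hence they form an orthonormal basis of $V_p$; in particular $V_p\subseteq\overline{\operatorname{span}}\,\mathcal W^A$ for every $p$. Since $\bigcup_{p\ge 1}V_p$ is dense in $L^2[0,1]$ (the $N$-adic step functions are total), $\mathcal W^A$ is total, and combined with orthonormality this proves it is an orthonormal basis. I expect the main obstacle to be organizational rather than analytic: arranging the reduced-word normal form so that the peeling argument and the count $N^p$ match up cleanly. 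Note that unitarity of $A$ is used only through the Cuntz relations, and the constant-first-row hypothesis only through the identity $S_0 1 = 1$.
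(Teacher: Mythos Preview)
Your argument is correct. Note, however, that the paper does not give its own proof of this theorem: it is quoted from \cite{DPS14}. The paper's Section~\ref{s2} does contain the ingredients of an alternative argument, and it is instructive to compare. Your orthonormality step --- peeling letters with $S_i^*S_j=\delta_{ij}I$ and handling the prefix case via $S_01=1$ --- is exactly the content of the unnamed Lemma preceding Theorem~\ref{th.3.2}. Your completeness step, on the other hand, is genuinely different from the paper's viewpoint: you count that the $N^p$ reduced words of length $\le p$ yield $N^p$ orthonormal vectors in the $N^p$-dimensional space $V_p$ of level-$p$ step functions, hence span it, and then invoke density of $\bigcup_p V_p$ in $L^2[0,1]$. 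The paper instead frames completeness as equivalent to irreducibility of the Cuntz representation (Theorem~\ref{th.3.2}), and establishes irreducibility by exhibiting an explicit permutative model on $l^2(\Omega)$ (Theorem~\ref{th3.1}). Your route is more elementary and entirely self-contained; the paper's route places the result in a representation-theoretic context, but as written it actually \emph{uses} the ONB property from \cite{DPS14} to define the intertwiner $W$ in Theorem~\ref{th3.1}, so it does not furnish an independent proof of the theorem.
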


\section{The representation of the Cuntz algebra $\mathcal O_N$}\label{s2}
In this section we study the representation of the Cuntz algebra defined by the isometries in \eqref{eq1.2}. We show that this is a permutative representation as the ones studied in \cite{MR1465320,DHJ15,MR1980913,MR2531317,MR2560855}.
We show that the ONB property is present precisely because the associated representation of the Cuntz algebra $\mathcal{O}_N$ on $L^2[0,1]$ is irreducible. We first find an irreducible representation, then show it is equivalent with the one in (\ref{eq1.2}).

Let $\Omega$ be the set of finite words with digits in $\{0,1,\dots,N-1\}$ and that do not end in 0, including the empty word $\ty$. Define the canonical vectors in $l^2(\Omega)$, 
$$e_\omega(\gamma)=\left\{\begin{array}{cc}
1,&\mbox{ if }\gamma=\omega\\
0,&\mbox{ if }\gamma\neq\omega.\end{array}\right., \quad(\omega,\gamma\in\Omega).$$
Define the linear operators $s_i$, $i=0,1,\dots,N-1$ on $l^2(\Omega)$ by
\begin{equation}
s_0e_\omega=\left\{\begin{array}{cc}
e_\ty,&\mbox{ if }\omega=\ty\\
e_{0\omega},&\mbox{ otherwise.}\end{array}\right.,\quad s_ie_\omega=e_{i\omega}, \quad(\omega\in \Omega,i\neq 0),
\label{eq3.1}
\end{equation}
($i\omega$ represents here concatenation of the digit $i$ to the word $\omega$). 

\begin{theorem}\label{th3.1}
The operators $(s_i)_{i=0}^{N-1}$ form an irreducible representation of the Cuntz algebra $\mathcal O_N$ on $l^2(\Omega)$.

If $A$ is a unitary matrix with constant first row $1/\sqrt{N}$ and $(S_i)_{i=0}^{N-1}$ is the associated representation of the Cuntz algebra as in \eqref{eq1.2}, then the linear operator $W:L^2[0,1]\rightarrow l^2(\Omega)$, defined by
\begin{equation}
W(S_\omega1)=e_\omega,\quad(\omega\in\Omega),
\label{eq3.1.1}
\end{equation}
is unitary and intertwines the two representations.
\end{theorem}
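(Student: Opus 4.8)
The statement has two parts: first, that the $(s_i)$ form an irreducible representation of $\mathcal O_N$ on $\ell^2(\Omega)$, and second, that $W$ is a well-defined unitary intertwiner. I will handle them in that order.

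For the first part I would verify the Cuntz relations directly from the definition \eqref{eq3.1}. Computing adjoints, for $i\neq 0$ we have $s_i^* e_\gamma = e_\omega$ if $\gamma = i\omega$ and $0$ otherwise; and $s_0^* e_\gamma = e_\omega$ if $\gamma = 0\omega$ (with $\omega\neq\ty$), $s_0^* e_\ty = e_\ty$, and $0$ if $\gamma$ does not begin with $0$ and $\gamma\neq\ty$. With these formulas, $s_i^* s_j e_\omega = \delta_{ij} e_\omega$ is a short case-check (the only subtlety is the word $\ty$ in the $i=j=0$ case, which still works since $s_0^* e_\ty = e_\ty$). For $\sum_i s_i s_i^* = I$, note that any $\gamma\in\Omega$ either is $\ty$ — fixed by $s_0 s_0^*$ and killed by the others — or begins with a unique digit $i$, in which case $s_j s_j^* e_\gamma = \delta_{ij} e_\gamma$. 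For irreducibility, suppose $P$ is a projection commuting with all $s_i$ and $s_i^*$. From $e_\omega = s_{w_1}\cdots s_{w_p} e_\ty$ (reading $\omega = w_1\cdots w_p$, using that no word ends in $0$ so the $s_0$ special case is never triggered in this direction), $P$ is determined on every basis vector by its action on $e_\ty$; and $\langle P e_\ty, e_\ty\rangle = \langle P e_\omega, e_\omega\rangle \in\{0,1\}$ forces $P = 0$ or $P = I$. Alternatively one cites the cyclicity of $e_\ty$ together with the fact that $\langle s_\alpha e_\ty, s_\beta e_\ty\rangle = \delta_{\alpha\beta}$, so the commutant of an algebra acting with a cyclic separating-type vector is trivial — but the direct argument is cleanest here.

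For the second part, the map $W$ is prescribed on the set $\{S_\omega 1 : \omega\in\Omega\}$, which by Theorem \ref{thdps} is exactly the generalized Walsh ONB $\mathcal W^A$ (the restriction to words not ending in $0$ being precisely the discarding of repetitions, since $S_0 1 = 1$). So $W$ sends an orthonormal basis of $L^2[0,1]$ bijectively onto the orthonormal basis $\{e_\omega\}$ of $\ell^2(\Omega)$, hence extends uniquely to a unitary. It remains to check the intertwining $W S_i = s_i W$, which by density it suffices to check on the basis vectors $S_\omega 1$. For $\omega$ not ending in $0$ and $i\neq 0$: $W S_i S_\omega 1 = W(S_{i\omega}1) = e_{i\omega} = s_i e_\omega = s_i W(S_\omega 1)$, using that $i\omega\in\Omega$. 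For $i = 0$: if $\omega\neq\ty$ then $0\omega\in\Omega$ and the same computation gives $e_{0\omega} = s_0 e_\omega$; if $\omega = \ty$ then $S_0 S_\ty 1 = S_0 1 = 1 = S_\ty 1$, so $W S_0 e_\ty\mapsto e_\ty = s_0 e_\ty$. Thus $W S_i = s_i W$ on a total set, hence everywhere, and taking adjoints gives $W S_i^* = s_i^* W$ as well.

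The only genuine friction is bookkeeping around the empty word and the identity $S_0 1 = 1$: one must be careful that $\Omega$ (words not ending in $0$) is the right index set so that $\{S_\omega 1\}$ has no repetitions and matches $\{e_\omega\}$ one-to-one, and that the special clause for $s_0$ in \eqref{eq3.1} is exactly what makes $W$ intertwine $S_0$ at the word $\ty$. Everything else is a routine verification of the Cuntz relations and a density argument; I do not anticipate a substantive obstacle.
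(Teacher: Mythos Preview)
Your proposal is essentially correct, and the verification of the Cuntz relations, the unitarity of $W$ (via Theorem~\ref{thdps}), and the intertwining on basis vectors are handled exactly as in the paper.

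The one substantive difference is the irreducibility argument. The paper does not argue directly on projections in the commutant; instead it invokes \cite[Theorem~5.1]{BJK00}, noting that $\mathcal K=\bc e_\ty$ is $s_i^*$-invariant and cyclic, so the commutant is parametrized by fixed points of the map $A\mapsto\sum_i V_iAV_i^*$ on $B(\mathcal K)$, which is one-dimensional. Your direct approach is more self-contained, but as written it has a gap: from $\langle P e_\ty,e_\ty\rangle=\langle P e_\omega,e_\omega\rangle$ you cannot conclude this common value lies in $\{0,1\}$ --- a projection may very well have all diagonal entries equal to $1/2$. The repair is to also use the commutation with the adjoints that you already assumed: $s_i^*P e_\ty=P s_i^*e_\ty=\delta_{i0}Pe_\ty$, so $v:=Pe_\ty$ satisfies $s_0^*v=v$ and $s_i^*v=0$ for $i\neq 0$, and a short induction on word length shows the only such vectors are scalar multiples of $e_\ty$. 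Then $Pe_\ty=ce_\ty$, $P^2=P$ forces $c\in\{0,1\}$, and cyclicity of $e_\ty$ gives $P=cI$. With that patch your argument is arguably cleaner than the paper's in that it needs no external citation; the paper's route is shorter if one is willing to quote \cite{BJK00}.
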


\begin{proof}
We prove that the operators $(s_i)_{i=0,\dots,N-1}$ satisfy the Cuntz relations. It is easy to see that the operators are norm preserving, hence isometries, and that their ranges are orthogonal. Thus $s_i^*s_j=\delta_{i,j}I$. Also we can see that
$s_0^*e_{\ty}=s_0^*s_0e_{\ty}=e_{\ty}$, if $\omega=\omega_1\omega_2\dots\omega_n\neq\ty$ then $$s_0^*e_\omega=s_0^*s_{\omega_1}e_{\omega_2\dots\omega_n}=\delta_{0,\omega_1}e_{\omega_2\dots\omega_n},$$
and, for $i\neq 0$, $s_i^*e_\ty=0$, and if $\omega=\omega_1\omega_2\dots\omega_n\neq\ty$ then
$$s_i^*e_{\omega}=\delta_{i,\omega_1}e_{\omega_2\dots\omega_n}.$$
With this, we can easily check that
$$\sum_{i=0}^{N-1}s_is_i^*e_{\omega}=e_{\omega},\quad(\omega\in\Omega).$$

To prove that the representation is irreducible, we use \cite[Theorem 5.1]{BJK00}. The subspace $\mathcal K$ spanned by the vector $e_\ty$ is invariant for the operators $S_i^*$ and cyclic for the representation. Indeed $s_i^*e_\ty=\delta_{0,i}e_\ty$, $i=0,1,\dots,N-1$. Also $s_\omega\mathcal K$ contains $e_\omega$ so $\mathcal K$ is cyclic for the representation.

Define the operator $V_i^*$ on $\mathcal K$ by $V_i^*\xi=S_i^*\xi$, $\xi\in \mathcal K, i=0,1,\dots,N-1$. By \cite[Theorem 5.1]{BJK00}, the commutant of the representation is in linear one-to-one correspondence with the fixed points of the map on linear operators on $\mathcal K$: $A\mapsto \sum_{i=0}^{N-1}V_iAV_i^*$. Since $\mathcal K$ is one-dimensional, the space of the fixed points of this map is one dimensional, and therefore the commutant is trivial and the representation is irreducible.

Since $\{S_\omega 1 :\omega\in\Omega\}$ is an orthonormal basis, the operator $W$ is unitary and a simple check shows that $WS_i(S_{\omega'}1)=s_i W(S_{\omega'})1$, and   $WS_i^*(S_{\omega}1)=s_i^* W(S_{\omega}1)$ for all $\omega\in\Omega$, $i=0,1,\dots,N-1$, which shows that $W$ intertwines the two representations.
\end{proof}

The construction of the generalized Walsh basis and the previous theorem suggest to study conditions under which vector families of the type  $\{S_{\omega}v_0\}$ become ONB. Theorem \ref{th.3.2} below provides a partial answer. We will need the following lemma (which is more general than Lemma 5 in \cite{PW} whose proof however is essentially the same).
\begin{lemma} Let $S_i:H\to H$, $i=0,..,N-1$ be a representation of the Cuntz algebra $\mathcal{O}_N$ on the separable Hilbert space $H$, and $ v_0\in H$ such that $\|v_0\|=1$ and $S_0(v_0)=v_0$. Then the family $$\{S_{\omega}v_0 : \mbox{ $\omega$ is a finite word over $\{0,1,..., N-1\}$ }\}$$  (with repeats removed) is orthonormal.
\end{lemma}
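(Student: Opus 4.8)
The plan is to reduce everything to the two statements we actually need about the isometries $S_\omega := S_{\omega_1}\cdots S_{\omega_p}$: first, that each $S_\omega v_0$ is a unit vector (immediate, since each $S_i$ is an isometry and $\|v_0\|=1$), and second, that two such vectors indexed by distinct reduced words are orthogonal. So the real content is the orthogonality claim, and I would organize it around the Cuntz relations and the fixed-point property $S_0 v_0 = v_0$.

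The key normalization step is to observe that the hypothesis $S_0 v_0 = v_0$ lets us replace any word $\omega$ by its ``reduced'' form: since $S_0 v_0 = v_0$, appending zeros on the right of a word does not change the vector, i.e. $S_\omega v_0 = S_{\omega 0^k} v_0$ for all $k$. Hence without loss of generality every vector in the family is of the form $S_\eta v_0$ where $\eta$ is either empty or does not end in $0$; these are exactly the indices left after ``repeats are removed,'' and distinct such $\eta$ give genuinely distinct vectors that we must show are orthogonal. Then I would take two reduced words $\eta = \eta_1\cdots\eta_p$ and $\zeta = \zeta_1\cdots\zeta_q$ with, say, $p \le q$, and compute the inner product $\langle S_\eta v_0, S_\zeta v_0\rangle = \langle v_0, S_\eta^* S_\zeta v_0\rangle$. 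Using $S_i^* S_j = \delta_{ij} I$ repeatedly (peeling off matching initial letters), one of two things happens: either $\eta$ and $\zeta$ disagree at some position $\le p$, in which case $S_\eta^* S_\zeta = 0$ and the inner product vanishes; or $\eta$ is a prefix of $\zeta$, in which case $S_\eta^* S_\zeta = S_{\zeta_{p+1}\cdots\zeta_q}$ and the inner product becomes $\langle v_0, S_{\zeta_{p+1}\cdots\zeta_q} v_0\rangle$.

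The remaining case is the crux: I must show $\langle v_0, S_\mu v_0\rangle = 0$ whenever $\mu = \zeta_{p+1}\cdots\zeta_q$ is a nonempty suffix arising this way. Here is where reducedness is used: because $\eta$ and $\zeta$ are reduced and $\eta$ is a \emph{proper} prefix of $\zeta$ (the case $\eta = \zeta$ being excluded), the letter $\mu_1 = \zeta_{p+1}$ cannot make $\zeta$ end in a forbidden pattern — more to the point, since $\zeta$ is reduced, $\mu$ is not the all-zeros word, so $\mu$ contains at least one nonzero letter. I then argue $\langle v_0, S_\mu v_0\rangle = 0$ by locating the \emph{last} nonzero letter of $\mu$, writing $\mu = \mu' \, i \, 0^k$ with $i \ne 0$, so that $S_\mu v_0 = S_{\mu'} S_i S_0^k v_0 = S_{\mu'} S_i v_0$; it therefore suffices to show $S_i v_0 \perp (S_0)^* {}^{|\mu'|} v_0$-type expressions, or more cleanly, that $\langle v_0, S_{\mu'} S_i v_0\rangle = \langle (S_{\mu'})^* v_0, S_i v_0\rangle$ and $(S_{\mu'})^* v_0$ lies in the range-complement. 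The slickest route is to use the other Cuntz relation $\sum_j S_j S_j^* = I$ together with $v_0 = S_0 v_0$: iterating gives $v_0 = S_0^n v_0$ for all $n$, so $S_0^* v_0 = v_0$ and hence $S_i^* v_0 = S_i^* S_0 v_0 = \delta_{i0} v_0 = 0$ for $i \ne 0$. Applying $(S_{\mu'})^*$ and using $S_i^* v_0 = 0$ for the nonzero letter $i$ collapses the inner product to $0$. Combining the three cases gives orthogonality of all distinct pairs, completing the proof.

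I expect the main obstacle to be purely bookkeeping: setting up the word-reduction carefully enough that ``repeats removed'' is interpreted correctly (so that the family is indexed by reduced words), and then making sure the suffix $\mu$ that survives the peeling really is non-all-zero so that the $S_i^* v_0 = 0$ identity can be invoked. There is no analytic difficulty — everything follows formally from $S_i^* S_j = \delta_{ij} I$, from $\sum_j S_j S_j^* = I$, and from the single extra hypothesis $S_0 v_0 = v_0$ (which upgrades to $S_0^* v_0 = v_0$ and $S_i^* v_0 = 0$ for $i\ne 0$). The argument is essentially the one in \cite{PW} for the Walsh case, transcribed to an abstract Cuntz representation with an arbitrary fixed unit vector $v_0$ in place of the constant function $1$.
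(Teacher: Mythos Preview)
Your argument is correct and follows the same line as the paper's proof: peel matching initial letters via $S_i^*S_j=\delta_{ij}I$, reduce to $\langle v_0, S_\mu v_0\rangle$ for a suffix $\mu$, and kill this using $S_i^* v_0=\delta_{i0}v_0$ (which you derive from $v_0=S_0v_0$) together with the fact that removing repeats forces $\mu$ to contain a nonzero letter. One small note: you invoke the relation $\sum_j S_jS_j^*=I$ but never actually use it---only $S_i^*S_j=\delta_{ij}I$ and $S_0v_0=v_0$ are needed, as your own derivation of $S_i^*v_0=\delta_{i0}v_0$ already shows.
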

\begin{proof}
Notice $\ip{S_{\omega} v_0}{S_{\gamma} v_0}=0$ whenever the first digit in $\omega$ differs from the first digit in $\gamma$. The same is true if $\omega_k\neq \gamma_k$ for some $k$. The remaining case is when $\omega$ is a prefix in $\gamma$ or vice versa. Assuming the former, the inner product reduces to one of the form
 $\ip{v_0}{S_{\gamma'} v_0}$ or its conjugate. Because $S_0v_0=v_0$, the last inner product is again zero unless $$\gamma'= \underbrace{0\dots0}_{n\geq 0\mbox{ times}},$$ and in this case $S_{\gamma'}v_0=v_0$, but the repetitions are removed.
\end{proof}

\begin{theorem}\label{th.3.2}
Let $\rho:=(S_i:H\to H)_{_{i=0,..,N-1}}$ be a representation of the Cuntz algebra $\mathcal{O}_N$ on the separable Hilbert space $H$, and $ v_0\in H$ such that $\|v_0\|=1$ and $S_0v_0=v_0$. The following statements are equivalent:
\begin{enumerate}
	\item The family $\{S_{\omega}v_0 : \mbox{ $\omega$ word over $\{0,1,..., N-1\}$}\}$ (with repeats removed) is an ONB.
	\item The representation $\rho$ is irreducible.
\end{enumerate}

\begin{proof} The implication (i) $\Rightarrow$ (ii) follows by repeating the argument in the proof of Theorem \ref{th3.1}, i.e., $\rho$ is equivalent to an irreducible representation. For the converse, let $K$ be the closed space spanned by all the vectors $S_\omega v_0$. Clearly this space is invariant under all the isometries $S_i$ and their adjoints $S_i^*$ (due to the Cuntz relations). Therefore this subspace is invariant under the representation $\rho$, so it must be equal to the entire space $H$, because the representation is irreducible.

\end{proof}
\end{theorem}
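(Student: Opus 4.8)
The plan is to establish the two implications of Theorem~\ref{th.3.2} by leveraging the machinery already set up for Theorem~\ref{th3.1}, together with the orthonormality supplied by the preceding lemma.

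For the implication (i) $\Rightarrow$ (ii), I would argue exactly as in the proof of Theorem~\ref{th3.1}. Assume $\{S_\omega v_0 : \omega \text{ a word}\}$ (repeats removed) is an ONB of $H$. Since the operators $s_i$ on $l^2(\Omega)$ from \eqref{eq3.1} form an irreducible representation and $\{e_\omega : \omega \in \Omega\}$ is the standard ONB of $l^2(\Omega)$, one defines $W : H \to l^2(\Omega)$ by $W(S_\omega v_0) = e_\omega$; this is a well-defined unitary because both families are ONBs indexed by the same set $\Omega$ (noting that $S_0 v_0 = v_0$ is precisely the relation that collapses trailing zeros, matching the definition of $\Omega$). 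A direct check — identical to the one at the end of the proof of Theorem~\ref{th3.1} — shows $W S_i = s_i W$ and $W S_i^* = s_i^* W$ on the dense span of the $S_\omega v_0$, hence everywhere by continuity. Thus $\rho$ is unitarily equivalent to the irreducible representation $(s_i)$, so $\rho$ is irreducible.

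For the converse (ii) $\Rightarrow$ (i), assume $\rho$ is irreducible. By the lemma, the family $\{S_\omega v_0\}$ (repeats removed) is already orthonormal, so it remains only to prove that its closed linear span $K$ is all of $H$. The span of the vectors $S_\omega v_0$ is clearly invariant under each $S_i$ (since $S_i S_\omega v_0 = S_{i\omega} v_0$), and it is invariant under each $S_i^*$ as well: using the Cuntz relations, $S_i^* S_\omega v_0$ is either $0$ (if the first digit of $\omega$ is not $i$) or $S_{\omega'} v_0$ where $\omega'$ is $\omega$ with its first digit removed, and when $\omega$ is empty one uses $S_i^* v_0 = S_i^* S_0 v_0 = \delta_{i,0} v_0$. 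Hence $K$ is a closed subspace invariant under every $S_i$ and every $S_i^*$, i.e., invariant under the representation $\rho$. Since $\rho$ is irreducible and $K \neq \{0\}$ (it contains $v_0$), we conclude $K = H$, so the orthonormal family is complete, i.e., an ONB.

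The only mild subtlety — the "main obstacle" — is the bookkeeping around the trailing-zero collapse $S_0 v_0 = v_0$: one must be careful that "with repeats removed" corresponds exactly to indexing by $\Omega$ (words not ending in $0$), so that $W$ in the first implication is a genuine bijection onto the standard basis of $l^2(\Omega)$, and that in the computation of $S_i^* S_\omega v_0$ the empty-word case is handled via $v_0 = S_0 v_0$. Both points are already implicitly handled in the proof of Theorem~\ref{th3.1} and in the lemma, so no new ideas are needed.
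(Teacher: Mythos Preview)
Your proof is correct and follows essentially the same route as the paper: for (i) $\Rightarrow$ (ii) you build the unitary $W$ to the model representation on $l^2(\Omega)$ exactly as in Theorem~\ref{th3.1}, and for (ii) $\Rightarrow$ (i) you combine the orthonormality lemma with the observation that the closed span of $\{S_\omega v_0\}$ is a nonzero $\rho$-invariant subspace, hence all of $H$. The only difference is that you spell out more explicitly the $S_i^*$-invariance computation and the trailing-zero bookkeeping, which the paper leaves implicit.
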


\section{Computations of generalized Walsh bases and of coefficients}\label{s3}
We can index the generalized Walsh basis by nonnegative integers $\bn_0$ as follows: for $n\in\bn_0$, write the base-$N$ representation
\begin{equation}
n=n_1+Nn_2+\dots+N^pn_{p+1}+\dots, \mbox{ where $n_k=0$ for $k$ large enough.}
\label{eq1.4}
\end{equation}
Then
\begin{equation}
W_n^A=S_{n_1}\dots S_{n_p}1,\mbox{ if $n_k=0$ for all $k>p$.}
\label{eq1.5}
\end{equation}
Note that it does not matter if we pick a larger $p$ in \eqref{eq1.5}, because $S_01=1$.

We compute the generalized Walsh functions more explicitly.
\begin{proposition}\label{pr1.2}
Let $x$ be in $[0,1]$. We write the base $N$ representation of $x$
\begin{equation}
x=\frac{x_1}{N}+\frac{x_2}{N^2}+\dots,\quad (x_1,x_2,\dots\in\{0,\dots,N-1\})
\label{eq1.6}
\end{equation}
(We can ignore the cases when there are multiple representations of the same point, since we are working in $L^2[0,1]$ and the set of such points has Lebesgue measure zero).
For $n\in\bn_0$, consider the base $N$ representation as in \eqref{eq1.4}. Then
\begin{equation}
W_n^A(x)=\sqrt{N^p}a_{n_1x_1}a_{n_2x_2}\dots a_{n_px_p}, \mbox{ if $n_k=0$ for $k>p$}
\label{eq1.7}
\end{equation}
(Since $n_k=0$ for $k$ large, $\sqrt{N}a_{n_kx_k}=1$ for $k$ large, so we can use a larger $p$ in \eqref{eq1.7} if needed).
In other words,
\begin{equation}
W_n^A(x)=\sqrt{N^p}\sum_{j_1,\dots,j_p=0}^{N-1}a_{n_1 j_1}a_{n_2j_2}\dots a_{n_pj_p}\chi_{\left[\frac{j_1}{N}+\frac{j_2}{N^2}+\dots+\frac{j_p}{N^p},\frac{j_1}{N}+\frac{j_2}{N^2}+\dots+\frac{j_p}{N^p}+\frac{1}{N^p}\right]}(x),
\label{eq1.8}
\end{equation}
and here $p$ has the same meaning as in \eqref{eq1.7}.
\end{proposition}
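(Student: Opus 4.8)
The plan is to establish \eqref{eq1.7} by induction on $p$ and then read off \eqref{eq1.8} as a reformulation. First I would record how a single isometry acts on base-$N$ expansions. For $x = x_1/N + x_2/N^2 + \cdots$ that is not an $N$-adic rational (the exceptional set being Lebesgue-null, as already flagged after \eqref{eq1.6}) we have $x \in [x_1/N, (x_1+1)/N)$, so \eqref{eq1.1} gives $m_i(x) = \sqrt{N}\,a_{i x_1}$; moreover $Nx \mod 1 = x_2/N + x_3/N^2 + \cdots$, i.e.\ the digit string is shifted one step to the left. Substituting into \eqref{eq1.2} yields, for every $f \in L^2[0,1]$ and a.e.\ $x$,
\[
(S_i f)(x) = \sqrt{N}\,a_{i x_1}\, f\!\left(\tfrac{x_2}{N} + \tfrac{x_3}{N^2} + \cdots\right).
\]

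With this formula the induction is short. The base case $p = 0$ is $W_0^A = S_0 1 = 1$, which equals $\sqrt{N^0}$ times the empty product. For the inductive step write $W_n^A = S_{n_1}(S_{n_2}\cdots S_{n_p}1) = S_{n_1} W_m^A$, where $m$ has base-$N$ digits $n_2,\dots,n_p$; applying the displayed identity with $i = n_1$ and $f = W_m^A$, and then the induction hypothesis to $W_m^A$ at the shifted point (whose digits are $x_2, x_3, \dots$), gives
\[
W_n^A(x) = \sqrt{N}\,a_{n_1 x_1}\cdot \sqrt{N^{p-1}}\,a_{n_2 x_2}\cdots a_{n_p x_p} = \sqrt{N^p}\,a_{n_1 x_1}\cdots a_{n_p x_p}.
\]
The insensitivity to enlarging $p$ then follows from the constant-first-row hypothesis $a_{0j} = 1/\sqrt{N}$: if $n_k = 0$ then $\sqrt{N}\,a_{n_k x_k} = 1$, so trailing zero digits contribute trivial factors, consistent with $S_0 1 = 1$.

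Finally, \eqref{eq1.8} is just \eqref{eq1.7} rewritten by partitioning $[0,1]$ according to the first $p$ digits of $x$: the product $a_{n_1 x_1}\cdots a_{n_p x_p}$ depends only on $(x_1,\dots,x_p)$, and the set $\{x : x_k = j_k,\ k = 1,\dots,p\}$ agrees, up to a null set, with the interval $\big[\sum_{k=1}^p j_k N^{-k},\ \sum_{k=1}^p j_k N^{-k} + N^{-p}\big)$; summing over the $N^p$ tuples $(j_1,\dots,j_p)$ produces the stated expansion. The only point needing a little care is the Lebesgue-null set of points with two base-$N$ expansions, but since preimages of null sets under $x \mapsto Nx \mod 1$ are null, the a.e.\ identities survive the finitely many compositions involved; I do not anticipate any genuine obstacle here.
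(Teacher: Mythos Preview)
Your proof is correct and follows essentially the same approach as the paper's. Both arguments rest on the two observations that $m_i(x)=\sqrt{N}\,a_{i x_1}$ when $x$ has first digit $x_1$, and that $x\mapsto Nx\bmod 1$ shifts the digit string; the paper writes out the product $m_{n_1}(x)m_{n_2}(Nx\bmod 1)\cdots$ and evaluates each factor in one line, whereas you package the same computation as an induction on $p$, but the content is identical.
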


\begin{proof}
By \eqref{eq1.5}, with $n$ as in \eqref{eq1.5}, we have
$$W_n^A=S_{n_1}\dots S_{n_p}1,\mbox{ if $n_k=0$ for $k>p$}.$$
Then $$W_n^A(x)=m_{n_1}(x)m_{n_2}(Nx\mod1)\quad m_{n_p}(N^px\mod1).$$

If $x$ is as in \eqref{eq1.6}, then $x\in[{x_1}/N,(x_1+1)/N)$ so $m_{n_1}(x)=\sqrt{N}a_{n_1x_1}$. Then $Nx\mod1= x_2/N+x_3/N^2+\dots$ so $m_{n_2}(Nx\mod1)=\sqrt{N}a_{n_2x_2}$; by induction, $N^px\mod 1=x_p/N+x_{p+1}/N^2+\dots$ and  $m_{n_p}(N^px\mod1)=\sqrt{N} a_{n_px_p}$.

It follows that
$$W_n^A(x)=\sqrt{N^p}a_{n_1x_1}a_{n_2x_2}\dots a_{n_px_p}.$$

\end{proof}

\begin{definition}\label{deftensor}
Let $A$ be an $N\times N$ matrix and $B$ an $M\times M$ matrix. Then the tensor product of the two matrices $A\otimes B$ is an $NM\times NM$ matrix with entries
$$(A\otimes B)_{i_1+Ni_2,j_1+N j_2}=A_{i_1j_1}B_{i_2j_2},\quad(i_1,j_1=0,\dots,N-1,\quad i_2,j_2=0,\dots,M-1).$$
Thus the matrix $A\otimes B$ has the block form
$$A\otimes B=\begin{pmatrix}
	Ab_{0,0}&Ab_{0,1}&\dots &Ab_{0,M-1}\\
	Ab_{1,0}&Ab_{1,1}&\dots &Ab_{1,M-1}\\
	\vdots&\vdots&\ddots&\vdots\\
	Ab_{M-1,0}&Ab_{M-1,1}&\dots &Ab_{M-1,M-1}\\
\end{pmatrix}.$$
Note that the tensor operation is associative, $(A\otimes B)\otimes C=A\otimes (B\otimes C)$.

The matrix $A^{\otimes n}$ is obtained by induction: $A^{\otimes 1}:=A$, $A^{\otimes n}=A\otimes A^{\otimes (n-1)}$.

For an $N\times M$ matrix $B$, we denote by $\cj B$ the conjugate matrix of $B$, i.e., with entries $\cj B_{ij}:=\cj {(B_{ij})}$.
\end{definition}

\begin{proposition}\label{pr1.2.1}
Let $x$ be in $[0,1]$ and let
$
x=\frac{x_1}{N}+\frac{x_2}{N^2}+\dots,\quad (x_1,x_2,\dots\in\{0,\dots,N-1\})
$ be its base $N$ representation. Let $n=n_1+Nn_2+\dots +N^{p-1}n_p$ be the base $N$ representation of $n$. Then
\begin{equation}
W_n^A(x)=\sqrt{N^p}A^{\otimes p}_{n_1+Nn_2+\dots+N^{p-1}n_{p}, x_1+Nx_2+\dots+N^{p-1}x_{p}}.
\label{eq1.2.1.1}
\end{equation}

\end{proposition}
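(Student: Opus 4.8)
The plan is to reduce everything to Proposition \ref{pr1.2}, which already gives the closed form $W_n^A(x)=\sqrt{N^p}\,a_{n_1x_1}a_{n_2x_2}\cdots a_{n_px_p}$. All that remains is to identify this product of $p$ entries of $A$ with a single entry of the $p$-fold tensor power $A^{\otimes p}$, so the substance of the argument is a purely combinatorial lemma about tensor products of matrices together with a bookkeeping check on the base-$N$ digit ordering.

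First I would record the general identity: for any $N\times N$ matrix $A$ and any $p\geq 1$,
\begin{equation}
A^{\otimes p}_{\,i_1+Ni_2+\cdots+N^{p-1}i_p,\;j_1+Nj_2+\cdots+N^{p-1}j_p}=\prod_{k=1}^{p}A_{i_kj_k},\qquad (i_k,j_k\in\{0,\dots,N-1\}).
\label{eq1.2.1.pf}
\end{equation}
This is proved by induction on $p$. The case $p=1$ is trivial. For the inductive step, write $A^{\otimes p}=A\otimes A^{\otimes(p-1)}$ and apply Definition \ref{deftensor} with the first factor $A$ (size $N$) and the second factor $A^{\otimes(p-1)}$ (size $M=N^{p-1}$): with $k=i_2+Ni_3+\cdots+N^{p-2}i_p$ and $\ell=j_2+Nj_3+\cdots+N^{p-2}j_p$ one has $A^{\otimes p}_{i_1+Nk,\;j_1+N\ell}=A_{i_1j_1}A^{\otimes(p-1)}_{k,\ell}$, and the claim follows from the induction hypothesis applied to $A^{\otimes(p-1)}_{k,\ell}$, using $i_1+Nk=i_1+Ni_2+\cdots+N^{p-1}i_p$ (and similarly for the column index).

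Then I would simply specialize \eqref{eq1.2.1.pf} with $i_k=n_k$ and $j_k=x_k$ for $k=1,\dots,p$, so that the row index becomes $n_1+Nn_2+\cdots+N^{p-1}n_p=n$ (since $n_k=0$ for $k>p$) and the column index becomes $x_1+Nx_2+\cdots+N^{p-1}x_p$, and combine with the formula $W_n^A(x)=\sqrt{N^p}\,a_{n_1x_1}\cdots a_{n_px_p}$ from Proposition \ref{pr1.2} to obtain \eqref{eq1.2.1.1}. I would also remark, as in Proposition \ref{pr1.2}, that enlarging $p$ causes no problem: padding $n$ and $x$ with extra zero digits multiplies the tensor entry by factors $A_{0\,x_k}=1/\sqrt N$, which are exactly absorbed by the change from $\sqrt{N^p}$ to $\sqrt{N^{p'}}$.

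The only real obstacle is cosmetic: keeping the digit-to-index correspondence consistent with the left-to-right convention in which the tensor product is iterated (the leftmost digit $i_1$ pairs with the outermost factor $A$, i.e.\ the least significant slot $N^0$). Once the indexing in \eqref{eq1.2.1.pf} is aligned with Definition \ref{deftensor}, the proof is immediate; there is no analytic content beyond Proposition \ref{pr1.2}.
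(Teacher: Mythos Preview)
Your proof is correct and follows essentially the same route as the paper: reduce to the product formula \eqref{eq1.7} from Proposition \ref{pr1.2}, then verify by induction on $p$ (using $A^{\otimes p}=A\otimes A^{\otimes(p-1)}$ and Definition \ref{deftensor}) that the tensor entry $A^{\otimes p}_{n,x_1+\cdots+N^{p-1}x_p}$ equals $a_{n_1x_1}\cdots a_{n_px_p}$. One cosmetic note on your closing remark: when enlarging $p$ you are not padding $x$ with zeros but rather using further genuine digits $x_{p+1},\dots$ of $x$; the extra factors are still $a_{0,x_k}=1/\sqrt N$ because the first row of $A$ is constant, so your conclusion stands.
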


\begin{proof}
By \eqref{eq1.7}, we just have to prove, by induction, that
\begin{equation}
A^{\otimes p}_{n_1+Nn_2+\dots+N^{p-1}n_{p}, x_1+Nx_2+\dots+N^{p-1}x_{p}}=a_{n_1,x_1}a_{n_2,x_2}\dots a_{n_p,x_p},
\label{eq1.2.1.2}
\end{equation}
for all $p\geq$ and $n_1,\dots,n_p,x_1,\dots,x_p\in\{0,\dots,N-1\}$. This is clearly true for $p=1,2$. Assume it is true for $p$, we have
$$A^{\otimes (p+1)}_{n_1+Nn_2+\dots+N^{p}n_{p+1},x_1+Nx_2+\dots+N^px_{p+1}}=(A\otimes A^{\otimes p})_{n_1+N(n_2+\dots+N^{p-1}n_{p+1}),x_1+N(x_2+\dots+N^{p-1}x_{p+1})}$$$$=a_{n_1,x_1}a_{n_2,x_2}\dots a_{n_{p+1},x_{p+1}}.$$
\end{proof}

\begin{proposition}\label{pr1.4}
Let $n=n_1+Nn_2+\dots N^{p-1}n_p$ be the base $N$ representation of $n$. Suppose the function $f$ in $L^2[0,1]$ is piecewise constant $f_{k_1,\dots,k_p}$ on each interval $$\left[\frac{k_1}{N}+\frac{k_2}{N^2}+\dots+\frac{k_p}{N^p},\frac{k_1}{N}+\frac{k_2}{N^2}+\dots+\frac{k_p}{N^p}+\frac{1}{N^p}\right],$$ $k_1,\dots,k_p\in\{0,\dots,N-1\}$. Then

\begin{equation}
\ip{f}{W_n^A}=\sum_{k_1,\dots,k_p=0}^{N-1}\sqrt{N^p}f_{k_1,\dots,k_p}\cj A^{\otimes p}_{n_1+Nn_2+\dots+N^{p-1}n_p,k_1+Nk_2+\dots+N^{p-1}k_p}.
\label{eq1.4.0}
\end{equation}

\end{proposition}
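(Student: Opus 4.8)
The plan is to evaluate $\langle f,W_n^A\rangle=\int_0^1 f(x)\,\overline{W_n^A(x)}\,dx$ directly, using that both $f$ and $W_n^A$ are step functions adapted to the \emph{same} partition of $[0,1]$ into the $N^p$ intervals $I_{k_1,\dots,k_p}=[\,\frac{k_1}{N}+\dots+\frac{k_p}{N^p},\ \frac{k_1}{N}+\dots+\frac{k_p}{N^p}+\frac{1}{N^p}\,]$, $k_1,\dots,k_p\in\{0,\dots,N-1\}$. These intervals partition $[0,1]$ up to a Lebesgue-null set, so the integral decomposes as the finite sum $\sum_{k_1,\dots,k_p}\int_{I_{k_1,\dots,k_p}} f\,\overline{W_n^A}$, and it remains to evaluate each term.

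The key input is Proposition \ref{pr1.2.1} (equivalently the explicit form \eqref{eq1.8}): if $x\in I_{k_1,\dots,k_p}$, then the first $p$ base-$N$ digits of $x$ in \eqref{eq1.6} are exactly $x_1=k_1,\dots,x_p=k_p$, so $W_n^A$ is constant on $I_{k_1,\dots,k_p}$ with value $\sqrt{N^p}\,A^{\otimes p}_{n_1+Nn_2+\dots+N^{p-1}n_p,\ k_1+Nk_2+\dots+N^{p-1}k_p}$. Since $f$ equals $f_{k_1,\dots,k_p}$ on that interval and $|I_{k_1,\dots,k_p}|=N^{-p}$, the term $\int_{I_{k_1,\dots,k_p}} f\,\overline{W_n^A}$ is $f_{k_1,\dots,k_p}$ times the constant value of $\overline{W_n^A}$ there times $N^{-p}$; collecting these normalization factors and summing over all $(k_1,\dots,k_p)$ produces the right-hand side of \eqref{eq1.4.0}. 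Here one also uses that conjugation commutes with the tensor product, $\overline{A^{\otimes p}}=\cj A^{\otimes p}$, which is immediate from the entrywise formula in Definition \ref{deftensor}.

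I do not expect a genuine obstacle: the argument is just the observation that $W_n^A$ and $f$ are simultaneously constant on each $N$-adic block of length $N^{-p}$, combined with the already-established identification (Proposition \ref{pr1.2.1}) of that block value of $W_n^A$ with an entry of $A^{\otimes p}$. The only point needing a line of care is that the finitely many endpoints, and the points with two base-$N$ expansions, form a null set and may be ignored in $L^2[0,1]$ — exactly the remark already made in connection with \eqref{eq1.6} and \eqref{eq1.8}.
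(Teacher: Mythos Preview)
Your approach is exactly the paper's: the authors simply say the formula follows immediately from Propositions \ref{pr1.2} and \ref{pr1.2.1}, and you have spelled out precisely how --- decompose the integral over the $N^p$ dyadic blocks, use that both $f$ and $W_n^A$ are constant there, and read off the constant value of $W_n^A$ from \eqref{eq1.2.1.1}.

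One small caution: you write that ``collecting these normalization factors'' yields the right-hand side of \eqref{eq1.4.0}, but if you actually track them you get
\[
\int_{I_{k_1,\dots,k_p}} f\,\overline{W_n^A}
= f_{k_1,\dots,k_p}\cdot \sqrt{N^p}\,\overline{A^{\otimes p}_{n,k}}\cdot \frac{1}{N^p}
= \frac{1}{\sqrt{N^p}}\, f_{k_1,\dots,k_p}\,\cj A^{\otimes p}_{n,k},
\]
so the coefficient should be $1/\sqrt{N^p}$ rather than $\sqrt{N^p}$. (A quick Parseval check with $N=2$, $p=1$ confirms this.) This is a typo in the stated formula --- and it propagates to \eqref{eq1.4.2} --- not a flaw in your method, but since you glossed over the bookkeeping you did not catch it. Otherwise your argument is complete and matches the paper's.
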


\begin{proof}
The result follows immediately from Propositions \ref{pr1.2} and \ref{pr1.2.1}.
\end{proof}

\medskip
{\bf Notations and Identifications.}
Fix a scale $N\geq 2$ and some resolution level $p$. We identify a non-negative integer $n\leq N^p-1$ with its base $N$ representation $n=n_1+Nn_2+\dots N^{p-1}n_p$, $n_1,\dots,n_{p-1}\in\{0,\dots, N-1\}$, $n\leftrightarrow n_1n_2\dots n_p$. We index vectors in $\bc^{N^p}$ by numbers between 0 and $N^p-1$ and identify functions which are constant on the intervals $[k/N^p,(k+1)/N^p)$ with their corresponding vector of values.

With these identifications, the formula \eqref{eq1.2.1.1} becomes
\begin{equation}
W_n^A(x)=\sqrt{N^p}A^{\otimes p}_{n,x_1x_2\dots x_p},
\label{eq1.4.1}
\end{equation}
and the formula \eqref{eq1.4.0} becomes a multiplication of the matrix $A^{\otimes p}$ by the vector $f$. 
\begin{equation}
(\ip{f}{W_n^a})_{n=0}^{N^p-1}=\sqrt{N^p}\,\cj A^{\otimes p}f.
\label{eq1.4.2}
\end{equation}
\medskip

\begin{proposition}\label{pr1.3}{\bf [Change of basis between generalized Walsh bases]}
Let $A=(a_{ij})_{i,j=0,1,\dots,N-1}$ and $B=(b_{ij})_{i,j=0,1,\dots,N-1}$ be two $N$ by $N$ unitary matrices with constant first row. Then the change of basis matrix from $\{W_n^A\}_{n\in\bn_0}$ to $\{W_n^B\}_{n\in\bn_0}$ is given by the entries:
\begin{equation}
M_{nm}=\ip{W_n^B}{W_m^A}=(BA^*)_{n_1m_1}(BA^*)_{n_2m_2}\dots (BA^*)_{n_pm_p}=(BA^*)^{\otimes p}_{nm},\quad(m,n\in\bn),
\label{eq1.3.1}
\end{equation}
where $n=n_1+Nn_2+\dots$ and $m=m_1+Nm_2+\dots$ are the base $N$ representations, and $p$ is chosen such that $n_k=m_k=0$ for $k>p$.
\end{proposition}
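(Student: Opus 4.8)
The plan is to compute the inner product $\ip{W_n^B}{W_m^A}$ directly using the explicit pointwise formulas for generalized Walsh functions established in Proposition \ref{pr1.2}, and then recognize the resulting sum as an entry of a tensor power. First I would fix $p$ large enough that $n_k = m_k = 0$ for all $k > p$, and use \eqref{eq1.7} to write $W_n^B(x) = \sqrt{N^p}\, b_{n_1 x_1} b_{n_2 x_2}\cdots b_{n_p x_p}$ and $W_m^A(x) = \sqrt{N^p}\, a_{m_1 x_1} a_{m_2 x_2}\cdots a_{m_p x_p}$, where $x = x_1/N + x_2/N^2 + \cdots$ is the base $N$ expansion of $x$. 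Since both functions are constant on each interval $\left[\frac{j_1}{N}+\cdots+\frac{j_p}{N^p}, \frac{j_1}{N}+\cdots+\frac{j_p}{N^p}+\frac{1}{N^p}\right]$ of length $N^{-p}$, the integral $\int_0^1 W_n^B(x)\overline{W_m^A(x)}\,dx$ splits as a finite sum over the $N^p$ digit-strings $(j_1,\dots,j_p) \in \{0,\dots,N-1\}^p$.

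The key computational step is then
\begin{equation*}
\ip{W_n^B}{W_m^A} = \frac{1}{N^p}\cdot N^p \sum_{j_1,\dots,j_p=0}^{N-1} b_{n_1 j_1}\cdots b_{n_p j_p}\,\overline{a_{m_1 j_1}}\cdots\overline{a_{m_p j_p}} = \sum_{j_1,\dots,j_p=0}^{N-1}\prod_{k=1}^{p} b_{n_k j_k}\overline{a_{m_k j_k}},
\end{equation*}
and the product over $k$ lets the sum factor completely:
\begin{equation*}
\ip{W_n^B}{W_m^A} = \prod_{k=1}^{p}\left(\sum_{j=0}^{N-1} b_{n_k j}\,\overline{a_{m_k j}}\right) = \prod_{k=1}^{p}(BA^*)_{n_k m_k}.
\end{equation*}
Here I use that $(BA^*)_{n_k m_k} = \sum_{j} b_{n_k j}\overline{a_{m_k j}}$ since $A^* = \cj A^{T}$. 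Finally, by the same inductive identity proved in Proposition \ref{pr1.2.1} (applied to the matrix $BA^*$ in place of $A$), the product $\prod_{k=1}^p (BA^*)_{n_k m_k}$ equals $(BA^*)^{\otimes p}_{nm}$, giving \eqref{eq1.3.1}. One should also check the consistency of the formula under enlarging $p$: since $A$ and $B$ have constant first row $1/\sqrt{N}$, the $(0,0)$ entry of $BA^*$ is $\sum_j \frac{1}{\sqrt N}\cdot\frac{1}{\sqrt N} = 1$, so appending trailing zero digits $n_k = m_k = 0$ multiplies the product by $1$ and changes nothing — this is the analogue of the remark after \eqref{eq1.5}.

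The only genuinely delicate point is the bookkeeping around multiple base-$N$ representations of dyadic-type rationals and the choice of $p$; but as noted in Proposition \ref{pr1.2}, the ambiguous points form a Lebesgue-null set and are irrelevant in $L^2[0,1]$, so this is not a real obstacle. Everything else is a routine orthogonality-free calculation: the crucial structural fact making it work is that the defining formula for $W_n^A(x)$ is a pure product over digits, so the integral over $[0,1]$ decomposes as a product of identical one-digit sums, each of which is exactly an entry of $BA^*$. Thus the main "idea," such as it is, is simply to recognize the tensor structure; I expect the write-up to be short.
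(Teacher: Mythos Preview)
Your proposal is correct and follows essentially the same route as the paper's proof: use the explicit digit-product formula from Proposition \ref{pr1.2} to turn $\ip{W_n^B}{W_m^A}$ into a sum over $(j_1,\dots,j_p)$, factor that sum into a product over digits, and recognize each factor as $(BA^*)_{n_k m_k}$. Your write-up is in fact slightly more thorough than the paper's, since you also justify the tensor-power identification via Proposition \ref{pr1.2.1} and check stability under enlarging $p$.
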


\begin{proof}
Using Proposition \ref{pr1.2}, \eqref{eq1.8} we have
$$M_{nm}=\sum_{j_1,j_2,\dots,j_p=0}^{N-1}N^pb_{n_1j_1}b_{n_2j_2}\dots b_{n_pj_p}\cj a_{m_1j_1}\cj a_{m_2j_2}\dots a_{m_pj_p}\cdot\frac{1}{N^p}$$
$$=\left(\sum_{j_1=0}^{N-1}b_{n_1j_1}\cj a_{m_1j_1}\right)\left(\sum_{j_2=0}^{N-1}b_{n_2j_2}\cj a_{m_2j_2}\right)\dots\left(\sum_{j_p=0}^{N-1}b_{n_pj_p}\cj a_{m_pj_p}\right)$$$$=(BA^*)_{n_1m_1}(BA^*)_{n_2m_2}\dots (BA^*)_{n_pm_p}$$

\end{proof}

\section{A fast generalized Walsh transform}\label{fast}
Let $I_n$ be the $n\times n$ identity matrix. For a function $f$ which is piecewise constant on intervals of length $1/N^p$, as in Proposition \ref{pr1.4}, to find its coefficients in the generalized basis, according to \eqref{eq1.4.2}, we need a multiplication by the matrix $\cj A^{\otimes p}$. Since this is an $N^p\times N^p$ matrix, each coefficient requires $N^p$ operations (by operation, we mean a multiplication and an addition of complex numbers). Since there are $M:=N^p$ coefficients, we need $(N^p)^2$ operations. However, using ideas from \cite{LeKa86}, we can improve the speed of these computations using a certain factorization, based on the following relations: if $A,A'$ are $n\times n$ matrices and $B,B'$ are $m\times m$ matrices then
$$(A\otimes B)\cdot (A'\otimes B')=(A\cdot A')\otimes (B\cdot B'),$$
and in particular
$$I_n\otimes (B\cdot B')=(I_n\otimes B)\cdot (I_n\otimes B').$$

Therefore, we have
$$\cj A^{\otimes p}=(\cj A\otimes I_{N^{p-1}})\cdot (I_N\otimes \cj A^{\otimes (p-1)})=(I_{N^0}\otimes \cj A\otimes I_{N^{p-1}})\cdot (I_N\otimes  (\cj A\otimes I_{N^{p-2}})\cdot (I_N\otimes\cj A^{\otimes (p-2)}))$$
$$=(I_{N^0}\otimes \cj A\otimes I_{N^{p-1}})\cdot (I_N\otimes\cj A\otimes I_{N^{p-2}})\cdot (I_N\otimes I_N\otimes \cj A^{\otimes (p-2)})$$$$=(I_{N^0}\otimes \cj A\otimes I_{N^{p-1}})\cdot (I_N\otimes\cj A\otimes I_{N^{p-2}})\cdot (I_{N^2}\otimes \cj A^{\otimes (p-2)})=\dots \mbox{ by induction }$$$$=\prod_{k=0}^{p-1}(I_{N^k}\otimes \cj A\otimes I_{N^{p-1-k}}).$$

Note that each term in the last product is a matrix which has at most $N$ non-zero entries in each row/column. Thus for each coefficient, each multiplication will require at most $N$ operations, and since there are $p$ terms in the product, each coefficient will require $Np$ operations. For all the $N^p$ coefficients, one needs $N^p\cdot p\cdot N$ operations, and with $N$ fixed and $p$ variable, and $M=N^p$ this means $O(M\log M)$ operations.

More precisely, we have, for $k=0,\dots,p-1$, and $i_1,j_1=0,1,\dots, N^k-1$, $i_2,j_2=0,1,\dots,N-1$, $i_3,j_3=0,1,\dots,N^{p-1-k}$
$$(I_{N^k}\otimes\cj A\otimes I_{N^{p-1-k}})_{i_1+N^ki_2+N^{k+1}i_3, j_1+N^kj_2+N^{k+1}j_3}=\delta_{i_1,j_1}\cj a_{i_2,j_2}\delta_{i_3,j_3}.$$
Thus, for a vector  $(v^{(k+1)}_i)_{i=0}^{N^p-1}$, if $v^{(k)}=((I_{N_k}\otimes\cj A\otimes I_{N^{p-1-k}})v^{(k+1)}$, then for $i_1=0,1,\dots,N^k-1$, $i_2=0,1,\dots, N-1$, $i_3=0,1,\dots,N^{p-1-k}-1$,
$$v^{(k)}(i_1+N^ki_2+N^{k+1}i_3)=\sum_{j_2=0}^{N-1}\cj a_{i_2,j_2}v^{(k+1)}(i_1+N^kj_2+N^{k+1}i_3).$$
Using these operations $p$ times, starting with a vector $v$, we let $v^{(p)}=v$ and then $\cj A^{\otimes p}v=v^{(0)}$.

Note also that the change of base between two generalized Walsh bases (see Proposition \ref{pr1.3}) can be performed using a fast algorithm, by writing
$$(BA^*)^{\otimes p}=\prod_{k=0}^{p-1}\left(I_{N^k}\otimes (BA^*)\otimes I_{N^{p-1-k}}\right).$$

\section{Uncertainty principles for Walsh transforms}\label{incert}
Let $G $ be the finite abelian group $\bz_m$ with $m$ elements,  and  $f:G\rightarrow \mathbf{C}$ a finite length signal on $G$ with $\hat{f}:G\to\mathbf{C}$ its (discrete) Fourier transform.  The uncertainty principle in this set up relates the support of $f$ and $\hat{f}$ as follows (see \cite{DoSt}):

\begin{equation}\label{unc}
|\text{supp}(f)|\cdot |\text{supp}(\hat{f})|\geq m
\end{equation}

As a simple consequence one obtains
\begin{equation}\label{med}
  |\text{supp}(f)| + |\text{supp}(\hat{f})|\geq 2\sqrt{ m}
\end{equation}

These inequalities are useful in signal recovery in case of sparse signals.  By exploiting a property of Chebotarev on the minors of the Fourier matrix when $m$ is prime in \cite{Taounc} inequality (\ref{med}) is vastly improved:  $|\text{supp}(f)| + |\text{supp}(\hat{f})|\geq  m+1 $.

\par In this section we explore the uncertainty principle (\ref{unc}) for the (classic and generalized) Walsh transforms.
 In the following, we let $A$ be a $N\times N$ unitary matrix with constant row $1/\sqrt{N}$, and $p\geq 1$ an integer. We will denote by $b_{ij}$, $i,j=0,..., N^P-1$  the entries of the matrix $A^{\otimes p}$. In this set-up if $f\in\mathbf{C}^p$ is a signal then $W_A(f)=\sqrt{N^p}A^{\otimes p}f$  encodes its 'frequencies'. Note that $\|A^{\otimes p}f\|=\|f\|$.  Define the generalized Walsh-Fourier transform corresponding to $A$ by
$$Tf(k):=\frac{1}{\sqrt{N^p}}W_A(f) =[ A^{\otimes p}f ](k),\quad(k=0,\dots, N^p-1).$$
The next theorem shows that a certain discrepancy occurs between the classic Walsh and generalized Walsh transforms. While the classic Walsh satisfies (\ref{unc}),  some generalized Walsh transforms may satisfy a weaker version.
\begin{theorem}\label{unw} Let $ N$, $p$ and $A=(a_{ij})_{i,j=0,\dots,N-1}$ as above and $f\in\mathbf{C}^{N^p}$ a non zero vector. If  $\alpha>0$ satisfies  $\max_{i,j}|a_{ij}|\leq (\frac{1}{\sqrt{N}})^{\alpha}$
then :
\begin{enumerate}
	\item The following uncertainty principle holds
	\begin{equation}\label{uncw}
|\text{supp}(f)|\cdot |\text{supp}(Tf)|\geq N^{p\alpha}
\end{equation}
\item If $A$ is the $2\times 2$ matrix giving rise to the classic Walsh transform then:
\begin{equation}\label{uncc}
|\text{supp}(f)|\cdot |\text{supp}(Tf)|\geq N^{p}
\end{equation}
\item
$\alpha=1$ if and only if $\sqrt{N}\cdot A$ is a Hadamard matrix, i.e., a matrix with all entries of absolute value one and orthogonal rows/columns.
\item If $A$ has real entries only and $N>2$ then necessarily $\alpha < 1$ unless $N$ is even and  $A$ is a $1/\sqrt{N}$ multiple of the Hadamard matrix.
\end{enumerate}
\end{theorem}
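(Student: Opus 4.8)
The theorem bundles four statements; the substantive part is (i), and (ii)–(iv) are essentially corollaries of (i) together with elementary facts about unitary/Hadamard matrices, so I would organize the proof around establishing (i) first and then harvesting the rest.

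For (i), the strategy is to mimic the Donoho--Stark argument behind \eqref{unc}, but tracking the entrywise bound on $A^{\otimes p}$. Write $b_{ij}$ for the entries of $A^{\otimes p}$. Since $A$ is unitary with all entries bounded by $(1/\sqrt N)^\alpha$, the tensor identity $A^{\otimes p}_{nm} = a_{n_1m_1}\cdots a_{n_pm_p}$ (Proposition \ref{pr1.2.1}, in the form \eqref{eq1.2.1.2}) gives $|b_{ij}| \le (1/\sqrt N)^{p\alpha}$ for all $i,j$. Now for $f \neq 0$, pick $k \in \suport{Tf}$ maximizing $|Tf(k)|$; then
$$\|f\|_\infty \ge |Tf(k)| = \Big|\sum_{j \in \suport f} b_{kj} f(j)\Big| \quad\text{is not the right direction};$$
instead use $\|Tf\|_\infty \le \max_{i,j}|b_{ij}| \cdot \|f\|_1 \le (1/\sqrt N)^{p\alpha} |\suport f| \cdot \|f\|_\infty$, and $\|f\|_\infty \le \|Tf\|_1 \cdot \max_{i,j}|b_{ij}|$ applied to the inverse transform $f = (A^{\otimes p})^{*}(Tf)$ (whose entries also satisfy the same bound, being conjugates of the $b_{ij}$), giving $\|f\|_\infty \le (1/\sqrt N)^{p\alpha}|\suport{Tf}|\cdot\|Tf\|_\infty$. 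Chaining these two inequalities and cancelling $\|f\|_\infty$ yields $1 \le (1/\sqrt N)^{2p\alpha}|\suport f|\,|\suport{Tf}|$, i.e. $|\suport f|\cdot|\suport{Tf}| \ge N^{p\alpha}$. The main obstacle here is purely bookkeeping: being careful that $\|Tf\|_\infty$ and $\|f\|_\infty$ are both nonzero (true since $f \neq 0$ and $A^{\otimes p}$ is invertible) so the cancellation is legitimate, and making sure the constant in both directions is genuinely the entrywise max, which it is because $\|Mv\|_\infty \le (\max|M_{ij}|)\|v\|_1$.

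For (ii), the classic Walsh case has $N=2$ and $A = \frac{1}{\sqrt2}\begin{bmatrix}1&1\\1&-1\end{bmatrix}$, so every entry has absolute value exactly $1/\sqrt2$, hence $\alpha = 1$ works and \eqref{uncw} specializes to \eqref{uncc}. For (iii): if $\sqrt N \cdot A$ is Hadamard then all entries of $A$ have modulus exactly $1/\sqrt N = (1/\sqrt N)^1$, so $\alpha=1$; conversely, if $\alpha = 1$ is admissible then $\max_{i,j}|a_{ij}| \le 1/\sqrt N$, but each row of the unitary matrix $A$ has $\ell^2$-norm $1$, which forces $\sum_j |a_{ij}|^2 = 1$ with $N$ terms each $\le 1/N$, hence each $|a_{ij}| = 1/\sqrt N$; combined with orthogonality of rows this says $\sqrt N \cdot A$ is Hadamard. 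For (iv): suppose $A$ is real, $N>2$, and $\alpha = 1$ is admissible; by (iii), $H := \sqrt N \cdot A$ is a real Hadamard matrix, i.e. an $N\times N$ $\{\pm1\}$-matrix with orthogonal rows, and such matrices exist only for $N=1,2$ or $N \equiv 0 \pmod 4$ — in particular $N$ must be even. Thus if $\alpha < 1$ fails, i.e. $\alpha = 1$ is achievable, then necessarily $N$ is even and $A$ is $1/\sqrt N$ times a Hadamard matrix, which is exactly the stated exceptional case. (The only nontrivial input invoked is the classical constraint $4 \mid N$ for real Hadamard matrices; I would either cite it or just use the weaker ``$N$ even'' consequence, which follows directly from orthogonality of the first two rows of a $\{\pm1\}$-matrix.)

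I would present these as four short clearly-labelled paragraphs inside one \verb|proof| environment, doing (i) carefully and (ii)–(iv) tersely. The only place warranting real care is the double inequality in (i) and the verification that the $\ell^1$-to-$\ell^\infty$ operator bound applies equally to $A^{\otimes p}$ and its adjoint; everything after that is linear algebra about Hadamard matrices.
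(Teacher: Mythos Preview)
Your proof is correct and essentially matches the paper's argument for (ii)--(iv). For (i) you take a slightly different but equally valid route: the paper bounds $\max_k|Tf(k)|$ via Cauchy--Schwarz against $\|f\|_2$, then uses unitarity $\|f\|_2=\|Tf\|_2$ and the trivial bound $\|Tf\|_2\le |\suport(Tf)|^{1/2}\max_k|Tf(k)|$, arriving at
\[
\max_k|Tf(k)|\le N^{-p\alpha/2}\,|\suport(f)|^{1/2}\,|\suport(Tf)|^{1/2}\,\max_k|Tf(k)|.
\]
You instead run the $\ell^1\!\to\!\ell^\infty$ operator bound twice---once for $A^{\otimes p}$ and once for its adjoint---and chain the two inequalities. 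The paper's version uses unitarity only once (as an isometry on $\ell^2$) and never touches the adjoint, while yours uses the fact that $(A^{\otimes p})^*$ has the same entrywise bound; both are standard Donoho--Stark style arguments and yield the identical constant $N^{p\alpha}$. For (iv) you supply a bit more detail than the paper (which simply says ``the entries in this case must be $\pm 1$''); your observation that orthogonality of the first two $\{\pm1\}$-rows already forces $N$ even is exactly what is needed.
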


\begin{proof}
(i) Using the calculations for $A^{\otimes p}$ from section \ref{fast}, we know that its entries satisfy  $|b_{ij}|\leq 1/\sqrt{N^{p\alpha}} $. Now we essentially follow the proof of (\ref{unc}). Using Cauchy-Schwartz we have:
$$ \text{max}_{k}|Tf(k)|\leq \frac{1}{\sqrt{N^{p\alpha}}}\cdot \sum_{j\in\text{supp}(f) }|f(j)|\cdot 1 $$
$$ \leq  \frac{1}{\sqrt{N^{p\alpha}}} ( |\text{supp}(f)|)^{1/2}(\sum_{j}|f(j)|^2)^{1/2}=    \frac{1}{\sqrt{N^{p\alpha}}} ( |\text{supp}(f)|)^{1/2}\|f\|= \frac{1}{\sqrt{N^{p\alpha}}} ( |\text{supp}(f)|)^{1/2}\|A^{\otimes p}f\|$$
$$=\frac{1}{\sqrt{N^{p\alpha}}} ( |\text{supp}(f)|)^{1/2}(\sum_j |Tf(j)|^2)^{1/2}\leq \frac{1}{\sqrt{N^{p\alpha}}} ( |\text{supp}(f)|)^{1/2}( |\text{supp}(Tf)|)^{1/2} \text{max}_{k}|Tf(k)|$$
Retaining the first and last terms and simplifying ($f$ is non zero)  we get (\ref{uncw}).

(ii) follows from (i)  because the entries of the (unique) $2\times 2$ matrix giving rise to classic Walsh system satisfy  $|a_{ij}|=1/\sqrt{2}$. Hence $\alpha=1$ is applicable in (i).

(iii) If $A$ is a $N\times N$ unitary matrix having constant $1/\sqrt{N}$ first row then $max|a_{ij}|\geq 1/\sqrt{N}$. However each row must have norm $1$. This means that, in case $\alpha=1$, all  $|a_{ij}|$ must be equal to $1/\sqrt{N}$ thus $\sqrt{N}\cdot A$ is Hadamard.

(iv) follows from (iii) and the fact that the entries in this case must be $\pm{1}$.
\end{proof}

Using the inequality between the arithmetic and geometric means, we obtain:
\begin{corollary}
$$|\suport(f)|+ |\suport(Tf)|\geq 2 N^{p\alpha/2}$$
\end{corollary}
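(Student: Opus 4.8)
The plan is to obtain this directly from part (i) of Theorem \ref{unw} by invoking the arithmetic--geometric mean inequality. Write $a := |\suport(f)|$ and $b := |\suport(Tf)|$. Since $f$ is a nonzero vector in $\mathbf{C}^{N^p}$ and $A^{\otimes p}$ is unitary (hence injective), both $Tf = A^{\otimes p}f$ and $f$ are nonzero, so $a$ and $b$ are positive integers and the inequality $a + b \geq 2\sqrt{ab}$ applies with no degenerate case. Theorem \ref{unw}(i) supplies the product bound $ab = |\suport(f)|\cdot|\suport(Tf)| \geq N^{p\alpha}$, and since $t\mapsto\sqrt{t}$ is increasing on $[0,\infty)$ we get $2\sqrt{ab} \geq 2\sqrt{N^{p\alpha}} = 2N^{p\alpha/2}$. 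Chaining the two estimates gives $|\suport(f)| + |\suport(Tf)| \geq 2N^{p\alpha/2}$, which is the claim.

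There is essentially no obstacle: the substantive content is entirely contained in the multiplicative uncertainty bound of Theorem \ref{unw}(i), and this corollary is only its additive reformulation, parallel to the passage from \eqref{unc} to \eqref{med} in the classical setting. The only points I would be careful to state are that $Tf\neq 0$ whenever $f\neq 0$ (so that $b>0$ and the square-root manipulations are legitimate), and, if one wishes to emphasize sharpness, that equality in AM--GM forces $a=b$, so the bound is attained exactly when $\suport(f)$ and $\suport(Tf)$ both have cardinality $N^{p\alpha/2}$; when $\sqrt{N}\,A$ is Hadamard (so $\alpha=1$ by Theorem \ref{unw}(iii)) this recovers the familiar $|\suport(f)|+|\suport(Tf)|\geq 2\sqrt{N^p}$.
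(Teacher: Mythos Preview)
Your argument is correct and is exactly the approach the paper takes: the corollary is introduced in the paper with the single phrase ``Using the inequality between the arithmetic and geometric means,'' and your write-up simply spells this out, together with the (harmless) observation that $Tf\neq 0$ so that $a,b>0$.
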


%

\begin{example} If $f$ is a  Dirac signal then equality is attained in $(\ref{uncc})$. This follows easily from the form of the matrix $A^{\otimes p}$ with $A$ as in (\ref{2by2}). The Dirac signal also produces equality in  (\ref{uncw}) when $\sqrt{N}\cdot A$ is the $N\times N$ Fourier matrix. The inequality (\ref{uncw}) is not always optimal. For example with $A$ as in (\ref{3by3}) we have $\text{max}|a_{ij}|=\sqrt{6}/3$ and $\alpha\approx 0.369$. With $p=3$ and $f_i=0$ for all $i$ but $f_{14}=1$ inequality (\ref{uncw}) simply says $8>3.375$.
\end{example}

Similarly to the context of the discrete Fourier transform \cite{DoSt}, we exploit uncertainty to show that sparse signals can be uniquely recovered when a few transform coefficients are lost.
\begin{theorem}\label{recov}  With the matrix $A$ and the constant $\alpha$ as above, consider a signal $f\in\mathbf{C}^{N^p}$  of which the following are known:

\begin{enumerate}
	\item  The number of non zero components of $f$, $N_f=|\suport(f)|$.
	\item A subset $B\subseteq \suport(Tf)$ of observed 'frequencies'  with which form the signal
$$\tilde{f}(k):=  \left\{ \begin{array}{cc}
 Tf(k),    &\text{ if  } k\in B\\
0  ,&\text{ otherwise}
\end{array}\right.
$$
\item The  number of unobserved 'frequencies'  $N_w=|B^c|$ satisfies
 \begin{equation}\label{miss}
 2N_fN_w<N^{p\alpha}
 \end{equation}

\end{enumerate}

Then $f$ can be uniquely reconstructed from data (i), (ii) and (iii).
\end{theorem}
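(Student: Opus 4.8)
The plan is to follow the classical Donoho--Stark argument for sparse recovery, adapted to the generalized Walsh transform $T$. First I would set up the uniqueness question in the standard way: suppose $f$ and $g$ are two signals, each with at most $N_f$ nonzero components, producing the same observed data, i.e. $Tf(k) = Tg(k) = \tilde f(k)$ for all $k \in B$. Put $h := f - g$. Then $h$ has at most $2N_f$ nonzero components, so $|\suport(h)| \le 2N_f$, and moreover $Th(k) = 0$ for every $k \in B$, which forces $\suport(Th) \subseteq B^c$, hence $|\suport(Th)| \le N_w$. If $h \neq 0$, Theorem \ref{unw}(i) applies to $h$ and gives $|\suport(h)| \cdot |\suport(Th)| \ge N^{p\alpha}$, so $2N_f N_w \ge N^{p\alpha}$, contradicting hypothesis (\ref{miss}). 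Therefore $h = 0$, i.e. $f = g$, which establishes uniqueness.

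Next I would address the reconstruction (existence/constructive) part, since the statement says $f$ "can be uniquely reconstructed from data (i), (ii), (iii)," not merely that at most one such $f$ exists. The natural procedure is: among all vectors $\varphi \in \mathbf{C}^{N^p}$ with $|\suport(\varphi)| \le N_f$ satisfying $T\varphi(k) = \tilde f(k)$ for all $k \in B$, there is by the uniqueness argument above at most one, and the true signal $f$ is one such vector, so it is exactly the one recovered by this search. I would phrase this as: $f$ is the unique solution of the (combinatorial) minimization / feasibility problem ``find $\varphi$ with support size $\le N_f$ and $P_B T\varphi = P_B\tilde f$,'' where $P_B$ is the coordinate projection onto $B$; existence is witnessed by $f$ itself and uniqueness is what was just proved. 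One can also note that since $T$ is (a scalar multiple of) the unitary $A^{\otimes p}$, once the support $S$ of $f$ is identified the values are recovered by solving the overdetermined-but-consistent linear system $(A^{\otimes p})|_{\text{columns }S}\, (f|_S) = \tilde f|_B$, which has a unique solution because the relevant submatrix has full column rank --- precisely the content of the uncertainty bound restricted to vectors supported on $S$.

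The key inputs are just Theorem \ref{unw}(i) and the elementary facts that supports add under subtraction and that $T$ is linear with $\suport(Th)$ disjoint from $B$ whenever $Th$ vanishes on $B$. There is no real analytic obstacle here; the only point requiring a little care is the logical structure of "reconstruction" versus "uniqueness." I would make explicit that the recovery is by exhaustive search over supports (or equivalently over the $\binom{N^p}{N_f}$ candidate support sets), so the theorem is an identifiability statement rather than a claim about a polynomial-time algorithm; any ambiguity on that front would be the thing a careful reader flags, so I would state it plainly. The rest is a two-line application of the uncertainty principle already proved.
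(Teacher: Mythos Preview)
Your proof is correct and follows essentially the same approach as the paper: the uniqueness argument via $h=f-g$, the support bounds $|\suport(h)|\le 2N_f$ and $|\suport(Th)|\le N_w$, and the contradiction with Theorem~\ref{unw}(i) are exactly what the paper does. Your additional discussion of the reconstruction step is more explicit than the paper's proof proper (the paper relegates this to a post-proof remark, phrasing it as the minimization $\min\{\|T^{-1}\tilde f - v\| : |\suport(v)|\le N_f\}$ rather than your feasibility search), but the content is the same.
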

\begin{proof} We prove uniqueness: if $g\in \mathbf{C}^{N^p}$ is such that $N_f=N_g$ and $\tilde{f}=\tilde{g}$, then the signal
$h:=f-g$ satisfies
$$|\text{supp}(h)|\leq N_f+N_g=2N_f$$
$$| \text{supp}(Th)|\leq N_w\text{ because } Th(k)=0\text{ for } k\in B$$
It follows that $ |\text{supp}(h)|\cdot |\text{supp}(Th)| <N^{p\alpha}$. This contradicts theorem (\ref{unw})  unless $h=0$.\\
\end{proof}
\begin{remark}
As in \cite{DoSt}, reconstruction of $f$ is possible by solving the min-problem  $$\min \{ \| T^{-1}\tilde{f}-v\|: \text{supp}(v)\leq N_f  \}.$$ The minimum must be attained and from uniqueness this happens when $v=f$.

In Theorem \ref{unw} we must have $\alpha\leq 1$. In terms of recovery by using uncertainty the best value is $\alpha=1$, otherwise inequality (\ref{miss}) will severely restrict the number $N_w$ of frequencies that can be missed or lost in transmission.
It would be interesting to find efficient recovery algorithms similar to those in \cite{DoSt}. As mentioned in that paper, solving the min problem by brute force is inefficient. \\
\end{remark}

One can generalize Theorem \ref{unw} by considering unitary matrices regardless of constant first row (which was necessary in order to obtain a generalized Walsh basis, see \cite{DPS14}). For a given unitary matrix we introduce its uncertainty constant. As we will see below, this constant satisfies some stability properties with respect to tensor products.

\begin{theorem}\label{thu1}
Let $A$ be a $n\times n$ unitary matrix. Let $M:=\max_{i,j}|A(ij)|$. Then
\begin{equation}
|\textup{supp}(f)|\cdot|\textup{supp}(Af)|\geq \frac1{M^2},\quad (f\in \bc^n\setminus\{0\}).
\label{equ1.1}
\end{equation}
\end{theorem}

\begin{proof}
Let $f\in\bc\setminus \{0\}$. We have, using the Cauchy-Schwarz inequality:
$$\max_k|Af(k)|=\max_k\left|\sum_j A(kj)f(j)\right|\leq M\sum_{j\in\textup{supp}(f)}|f(j)|\cdot 1\leq M\left(|\textup{supp(f)}|\right)^{\frac12}\|f\|$$
$$= M\left(|\textup{supp}(f)|\right)^{\frac12}\|Af\|=M\left(|\textup{supp}(f)|\right)^{\frac12}\left(\sum_j|Af(j)|^2\right)^{\frac12}$$
$$=M\left(|\textup{supp}(f)|\right)^{\frac12}\left(|\textup{supp}(Af)|\right)^{\frac12}\max_k |Af(k)|.$$
This implies \eqref{equ1.1}.
\end{proof}

\begin{definition}\label{defu2}
Let $A$ be an $n\times n$ unitary matrix. We define the {\it uncertainty constant} of $A$ to be
$$\mu(A):=\min_{f\in\bc^n\setminus\{0\}}|\textup{supp}(f)|\cdot |\textup{supp}(Af)|.$$
\end{definition}

\begin{corollary}\label{coru3}
For a unitary matrix $A$, on has
\begin{equation}
\mu(A)\geq \frac{1}{(\max_{i,j}|A(ij)|)^2}.
\label{equ3.1}
\end{equation}

If $\sqrt{N}A$ is an $N\times N$ Hadamard matrix, then
\begin{equation}
\mu(A)=N.
\label{equ3.2}
\end{equation}
\end{corollary}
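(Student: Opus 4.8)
The plan is to obtain the inequality \eqref{equ3.1} directly from Theorem \ref{thu1} and then to analyze the Hadamard case separately for the two inequalities $\mu(A)\le N$ and $\mu(A)\ge N$. For \eqref{equ3.1}, I would simply note that the bound $|\textup{supp}(f)|\cdot|\textup{supp}(Af)|\ge 1/M^2$ in Theorem \ref{thu1} holds for every nonzero $f$, where $M=\max_{i,j}|A(ij)|$; taking the infimum over $f\in\bc^n\setminus\{0\}$ and using Definition \ref{defu2} gives exactly $\mu(A)\ge 1/M^2$. No further work is needed here.

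Now suppose $\sqrt{N}A$ is an $N\times N$ Hadamard matrix, so that every entry of $A$ has absolute value $1/\sqrt{N}$ and hence $M=1/\sqrt{N}$. The lower bound $\mu(A)\ge N$ is then immediate from \eqref{equ3.1}, since $1/M^2=N$. For the reverse inequality $\mu(A)\le N$, the strategy is to exhibit a single nonzero vector $f$ realizing $|\textup{supp}(f)|\cdot|\textup{supp}(Af)|=N$. The natural candidate is a Dirac signal, $f=\delta_0$ (the vector with a $1$ in position $0$ and zeros elsewhere). Then $|\textup{supp}(f)|=1$, and $(Af)(k)=A(k,0)$, which has absolute value $1/\sqrt{N}\ne 0$ for every $k$; hence $|\textup{supp}(Af)|=N$ and the product equals $N$. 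Combined with Definition \ref{defu2}, this yields $\mu(A)\le N$, and together with the lower bound we conclude $\mu(A)=N$, which is \eqref{equ3.2}.

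I do not anticipate a genuine obstacle here, since both parts are short consequences of Theorem \ref{thu1} and Definition \ref{defu2}; the only point requiring a moment of care is verifying that a Dirac signal indeed has a fully-supported image under a (scaled) Hadamard matrix, which is precisely the statement that every entry of the matrix is nonzero — true since all entries have modulus $1/\sqrt{N}$. If one wanted to be slightly more careful, one could remark that any standard basis vector $\delta_j$ works equally well, each column of $A$ having all entries of modulus $1/\sqrt{N}$; this also connects with the Example following Theorem \ref{unw}, where the Dirac signal is noted to attain equality in the uncertainty inequality.
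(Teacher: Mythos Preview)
Your proposal is correct and follows essentially the same approach as the paper: both derive \eqref{equ3.1} directly from Theorem \ref{thu1}, obtain $\mu(A)\ge N$ in the Hadamard case from $M=1/\sqrt{N}$, and get $\mu(A)\le N$ by evaluating on a Dirac vector (the paper uses $\delta_1$, you use $\delta_0$). There is nothing to add.
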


\begin{proof}
\eqref{equ3.1} follows from Theorem \ref{thu1}. If $\sqrt{N}A$ is Hadamard, then $\max_{i,j}|A(ij)|=\frac1{\sqrt N}$ so $\mu(A)\geq N$. On the other hand $|\suport (A\delta_1)|=N$, so $\mu(A)\leq 1\cdot N$, and \eqref{equ3.2} follows.
\end{proof}

With the relation \eqref{equ3.2}, we obtain an interesting corollary about Hadamard matrices.

\begin{corollary}\label{coru3.2}
Let $\sqrt NA$ be an $N\times N$ Hadamard matrix, and assume $N_1\cdot N_2<N$, with $N_1,N_2\in\bn$, and let $n=N-N_2$. Then a matrix obtained from $A$ by taking $n$ rows and $N_1$ columns has rank $N_1$.
\end{corollary}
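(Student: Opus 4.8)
The plan is to argue by contradiction using the uncertainty constant bound $\mu(A)=N$ established in Corollary \ref{coru3}. Suppose $M$ is the $n \times N_1$ submatrix obtained by selecting a set $R$ of $n = N - N_2$ rows and a set $C$ of $N_1$ columns of $A$, and suppose for contradiction that $\operatorname{rank}(M) < N_1$. Then the columns of $M$ are linearly dependent, so there is a nonzero vector $c \in \bc^{N_1}$ with $Mc = 0$. First I would extend $c$ to a vector $f \in \bc^N$ by placing the entries of $c$ in the coordinates indexed by $C$ and zeros elsewhere; then $\operatorname{supp}(f) \subseteq C$, so $|\operatorname{supp}(f)| \le N_1$.

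Next I would examine $Af$. For a row index $i \in R$, the $i$-th entry of $Af$ is exactly the $i$-th entry of $Mc$ (since $f$ is supported on $C$), which is zero. Hence $\operatorname{supp}(Af)$ is contained in the complement of $R$, which has cardinality $N - n = N_2$; thus $|\operatorname{supp}(Af)| \le N_2$. Since $f \ne 0$ (because $c \ne 0$) and $A$ is unitary, $Af \ne 0$, so both supports are nonempty and we may apply Definition \ref{defu2} together with \eqref{equ3.2}:
$$N = \mu(A) \le |\operatorname{supp}(f)| \cdot |\operatorname{supp}(Af)| \le N_1 N_2 < N,$$
a contradiction. Therefore $\operatorname{rank}(M) = N_1$.

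This argument is essentially complete once the two support estimates are in place; there is no serious obstacle. The only point requiring a little care is the bookkeeping that the selected rows land in $R$ and that $f$ is genuinely supported inside $C$, so that the identification of the entries of $Af$ indexed by $R$ with the entries of $Mc$ is correct — but this is just unwinding the definition of matrix multiplication restricted to a submatrix. One should also note that the hypothesis $N_1 \le N$ is implicit (one cannot select more columns than $A$ has), and that $n = N - N_2 \ge 1$ follows from $N_1 N_2 < N$ with $N_1 \ge 1$, so the submatrix $M$ is well-defined.
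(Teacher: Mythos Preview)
Your proof is correct and follows essentially the same argument as the paper: assume the submatrix has rank less than $N_1$, extend a nontrivial null vector to $f\in\bc^N$ supported on the chosen columns, observe that $Af$ vanishes on the chosen rows so $|\suport(Af)|\le N_2$, and derive a contradiction with $\mu(A)=N$ from Corollary~\ref{coru3}. Your write-up is in fact a bit more careful about the bookkeeping (noting $f\neq 0$, checking the submatrix is well-defined), but the idea is identical.
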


\begin{proof}
Let $B$ be the matrix obtained from $\sqrt NA$ by taking the rows $i_1,\dots,i_n$ and the columns $j_1,\dots,j_{N_1}$. Assume that the rank of $B$ is strictly less then $N_1$. That means the columns of $B$ are linearly dependent so there is a non-zero vector $v=\sum_{k=1}^{N_1}v_k\delta_{j_k}$ such that $Bv=0$. Take then $f$ in $\bc^N$ by completing the vector $v$ with zeros. Then $Av$ is zero on the components $i_1,\dots, i_n$. Thus $|\suport(f)|\leq N_1$ and $|\suport (Af)|\leq N-n=N_2$. Then
$$|\suport(f)|\cdot|\suport(Af)|\leq N_1\cdot N_2< N=\mu(A).$$
This contradicts \eqref{equ3.2}. So $B$ has rank $N_1$.
\end{proof}

\begin{proposition}\label{pru4}
If $A_1$ and $A_2$ are unitary matrices, then
\begin{equation}
\mu(A_1\otimes A_2)\leq \mu(A_1)\cdot\mu(A_2).
\label{equ4.1}
\end{equation}
In particular, for a unitary matrix $A$ and $p\in\bn$,
\begin{equation}
\mu(A^{\otimes p})\leq \mu(A)^p.
\label{equ4.2}
\end{equation}
\end{proposition}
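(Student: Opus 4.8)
The plan is to prove the key inequality \eqref{equ4.1}; the special case \eqref{equ4.2} then follows immediately by induction on $p$ using associativity of the tensor product. To bound $\mu(A_1\otimes A_2)$ from above it suffices to exhibit a single nonzero vector $f\in\bc^{n_1n_2}$ (where $A_j$ is $n_j\times n_j$) with $|\textup{supp}(f)|\cdot|\textup{supp}((A_1\otimes A_2)f)|\le \mu(A_1)\mu(A_2)$. The natural candidate is a tensor product $f=f_1\otimes f_2$, where $f_j$ realizes the minimum in the definition of $\mu(A_j)$, i.e.\ $|\textup{supp}(f_j)|\cdot|\textup{supp}(A_jf_j)|=\mu(A_j)$.

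First I would record the two elementary facts about supports under tensoring. For vectors $u\in\bc^{n_1}$, $v\in\bc^{n_2}$, the vector $u\otimes v$ has $(u\otimes v)_{i_1+n_1 i_2}=u_{i_1}v_{i_2}$, so $(u\otimes v)_{i_1+n_1 i_2}\ne 0$ if and only if $u_{i_1}\ne 0$ and $v_{i_2}\ne 0$; hence $|\textup{supp}(u\otimes v)|=|\textup{supp}(u)|\cdot|\textup{supp}(v)|$, and in particular $u\otimes v\ne 0$ whenever $u,v\ne 0$. Second, the mixed-product rule $(A_1\otimes A_2)(f_1\otimes f_2)=(A_1f_1)\otimes(A_2f_2)$ holds for the tensor product of matrices as defined in Definition \ref{deftensor} (this is the same identity used in Section \ref{fast} in the special case of identity factors, applied here with both factors general).

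Combining these, with $f=f_1\otimes f_2$ I get $|\textup{supp}(f)|=|\textup{supp}(f_1)|\cdot|\textup{supp}(f_2)|$ and $(A_1\otimes A_2)f=(A_1f_1)\otimes(A_2f_2)$, so $|\textup{supp}((A_1\otimes A_2)f)|=|\textup{supp}(A_1f_1)|\cdot|\textup{supp}(A_2f_2)|$. Multiplying and regrouping,
\[
|\textup{supp}(f)|\cdot|\textup{supp}((A_1\otimes A_2)f)|=\bigl(|\textup{supp}(f_1)|\cdot|\textup{supp}(A_1f_1)|\bigr)\cdot\bigl(|\textup{supp}(f_2)|\cdot|\textup{supp}(A_2f_2)|\bigr)=\mu(A_1)\mu(A_2).
\]
Since $f\ne 0$, the minimum defining $\mu(A_1\otimes A_2)$ is at most this value, which is \eqref{equ4.1}. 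For \eqref{equ4.2}, apply \eqref{equ4.1} with $A_1=A$, $A_2=A^{\otimes(p-1)}$ and the inductive hypothesis $\mu(A^{\otimes(p-1)})\le\mu(A)^{p-1}$; the base case $p=1$ is trivial.

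There is essentially no hard step here — the only thing to be a little careful about is the indexing convention in Definition \ref{deftensor} (the ordering $i_1+n_1 i_2$ rather than $n_2 i_1+i_2$), so that the support-multiplicativity statement and the mixed-product identity are stated consistently; once the conventions are fixed, the verification is a one-line bookkeeping check. It is worth remarking (though not needed for the proof) that the inequality can be strict: the extremal vector for the tensor product need not be a simple tensor, which is exactly why one only gets $\le$ rather than equality.
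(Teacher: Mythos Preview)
Your proof is correct and follows essentially the same approach as the paper: choose minimizers $f_1,f_2$ for $\mu(A_1),\mu(A_2)$, use the support-multiplicativity of tensors together with the mixed-product identity $(A_1\otimes A_2)(f_1\otimes f_2)=(A_1f_1)\otimes(A_2f_2)$, and conclude; then derive \eqref{equ4.2} by induction. The paper's proof is slightly terser but identical in substance.
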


\begin{proof}
Let $A_1$ be $n_1\times n_1$ and $A_2$ be $n_2\times n_2$ unitary matrices. Let $f_1\in\bc^{n_1}\setminus \{0\}$ and $f_2\in \bc^{n_2}\setminus\{0\}$ be such that
$$\mu(A_1)=|\suport(f_1)|\cdot|\suport(A_1f_1)|,\quad \mu(A_2)=|\suport(f_2)|\cdot|\suport(A_2f_2)|.$$
We have, for $0\leq i_1\leq n_1-1$, $0\leq i_2\leq n_2-1$,
$$(f_1\otimes f_2)(i_1+n_1i_2)=f_1(i_1)f_2(i_2),$$
so $(f_1\otimes f_2)(i_1+n_1i_2)\neq 0$ if and only if $f_1(i_1)\neq 0$ and $f_2(i_2)\neq 0$. Therefore
$$|\suport (f_1\otimes f_2)|=|\suport(f_1)|\cdot|\suport(f_2)|.$$
Then
$$\mu(A_1\otimes A_2)\leq |\suport(f_1\otimes f_2)|\cdot |\suport( (A_1\otimes A_2)(f_1\otimes f_2))|$$
$$=|\suport(f_1)|\cdot |\suport(A_1f_1)|\cdot |\suport(f_2)|\cdot |\suport(A_2f_2)|=\mu(A_1)\cdot\mu(A_2).$$

The inequality \eqref{equ4.2} follows from \eqref{equ4.1} by induction.
\end{proof}

\begin{corollary}\label{coru5}
Let $A_1$ and $A_2$ be unitary matrices and $M_1:=\max_{i,j}|A_1(ij)|$, $M_2:=\max_{i,j}|A_2(ij)|$. Then
\begin{equation}
\frac{1}{M_1^2M_2^2}\leq \mu(A_1\otimes A_2)\leq \mu(A_1)\cdot \mu(A_2).
\label{equ5.1}
\end{equation}

In particular, if $A$ is a unitary matrix and $M:=\max_{i,j}|A(ij)|$, then, for $p\geq 1$,
\begin{equation}
\frac{1}{M^p}\leq \mu(A^{\otimes p})\leq \mu(A)^p.
\label{equ5.2}
\end{equation}
If $A$ is an $N\times N$ Hadamard matrix then
\begin{equation}
\mu(A^{\otimes p})=N^p.
\label{equ5.3}
\end{equation}
\end{corollary}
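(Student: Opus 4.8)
The plan is to read \eqref{equ5.1}--\eqref{equ5.3} directly off the two facts already established: the entrywise lower bound $\mu(A)\ge 1/(\max_{i,j}|A(ij)|)^2$ from Corollary \ref{coru3} (equivalently Theorem \ref{thu1}), and the tensor submultiplicativity from Proposition \ref{pru4}. I would first prove \eqref{equ5.1}, then specialize to \eqref{equ5.2}, and finally pin down the equality \eqref{equ5.3}.

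For \eqref{equ5.1}: the upper bound is exactly \eqref{equ4.1}. For the lower bound I would use Definition \ref{deftensor}: the entries of $A_1\otimes A_2$ are the products $(A_1\otimes A_2)_{i_1+n_1i_2,\,j_1+n_1j_2}=(A_1)_{i_1j_1}(A_2)_{i_2j_2}$, so $\max_{i,j}|(A_1\otimes A_2)(ij)|=M_1M_2$ (the maximum is realized by pairing an argmax of $|A_1|$ with an argmax of $|A_2|$). Then Corollary \ref{coru3} applied to the unitary matrix $A_1\otimes A_2$ gives $\mu(A_1\otimes A_2)\ge 1/(M_1M_2)^2$, which is the left-hand inequality.

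For \eqref{equ5.2} I would iterate: the upper bound is \eqref{equ4.2}, and the same entrywise computation shows $\max_{i,j}|A^{\otimes p}(ij)|=M^p$, so Corollary \ref{coru3} gives $\mu(A^{\otimes p})\ge 1/M^{2p}$. Since every column of a unitary matrix is a unit vector, $M\le 1$, hence $1/M^{2p}\ge 1/M^p$, which yields the stated (slightly weaker) inequality. For \eqref{equ5.3}, take $\sqrt N A$ to be an $N\times N$ Hadamard matrix as in \eqref{equ3.2}, so $M=1/\sqrt N$; then the lower bound just obtained reads $\mu(A^{\otimes p})\ge N^p$, while $\mu(A)=N$ by \eqref{equ3.2}, so \eqref{equ4.2} gives $\mu(A^{\otimes p})\le N^p$, and the two bounds force equality. (Alternatively, $(\sqrt N A)^{\otimes p}=\sqrt{N^p}\,A^{\otimes p}$ is again a Hadamard matrix, so \eqref{equ3.2} applies to $A^{\otimes p}$ directly.)

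I do not expect a genuine obstacle: the corollary is essentially a bookkeeping assembly of Corollary \ref{coru3} and Proposition \ref{pru4}. The only two points that need care are (a) that Corollary \ref{coru3} naturally produces the exponent $2p$ in the lower bound of \eqref{equ5.2}, so one must invoke $M\le 1$ to descend to the stated exponent $p$; and (b) in the quick justification of \eqref{equ5.3} one should recall that a tensor product of Hadamard matrices is again Hadamard.
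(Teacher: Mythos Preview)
Your proposal is correct and follows essentially the same route as the paper: compute $\max_{i,j}|(A_1\otimes A_2)(ij)|=M_1M_2$, then invoke Theorem \ref{thu1}/Corollary \ref{coru3} for the lower bound and Proposition \ref{pru4} for the upper, iterate for \eqref{equ5.2}, and use that a tensor power of a Hadamard matrix is Hadamard for \eqref{equ5.3}. You are in fact more careful than the paper on one point: the entrywise bound naturally gives $\mu(A^{\otimes p})\ge 1/M^{2p}$, and you explicitly use $M\le 1$ to pass to the weaker stated bound $1/M^{p}$, whereas the paper simply asserts that \eqref{equ5.2} follows from \eqref{equ5.1} by induction without flagging this.
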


\begin{proof}
Note that
$$\max_{i,j}|(A_1\otimes A_2)(ij)|=\max_{i,j}|A_1(ij)|\cdot \max_{i,j}|A_2(ij)|.$$
Then the result follows from Theorem \ref{thu1} and Proposition \ref{pru4}. The inequality \eqref{equ5.2} follows from \eqref{equ5.1} by induction. If $A$ is a Hadamard matrix then so is $A^{\otimes p}$, and \eqref{equ5.3} follows from \eqref{equ3.2}.
\end{proof}

\begin{example}\label{exu7}
Let us investigate the uncertainty constant in dimension 3. Consider a $3\times 3$ matrix $A$ with constant first row $\frac1{\sqrt{3}}$.

 $\mu(A)=1$. If $\mu(A)=1$ this means that there exists $f\in \bc^3$ with $|\suport (f)|=|\suport(Af)|=1$. Then $f=\delta_a$ and $Af=\delta_b$ for some $a,b\in\{1,2,3\}$, which means that one of the columns of $A$ is a canonical vector $\delta_b$. Obviously, this cannot happen if the first row is constant, because, then, one of the columns is $(1,0,0)^T$ (say the first one), and reading the rows in the other columns, we get that the vectors $(1,1)$, $(A(22),A(23))$ and $(A(32),A(33))$ are orthogonal, which is impossible.

$\mu(A)=2$. If $\mu(A)=2$ then (case 1) there exists $f\in\bc^3$ with $|\suport(f)|=1$ and $|\suport(Af)|=2$ or (case 2) there exists $f\in\bc^3$ with $|\suport(f)|=2$ and $|\suport(Af)=1$.

In case 1, $f=\delta_i$ and, by a permutation, we can assume $f=\delta_1$ and $Af$ has the zero, so the first column of $A$ has a zero, and, by a permutation, we can assume that it is on the second row. Therefore the matrix has the form
$$\begin{pmatrix}
	\frac{1}{\sqrt{3}}&\frac{1}{\sqrt{3}}&\frac{1}{\sqrt{3}}\\
	0&a&b\\
	c&d&e
\end{pmatrix}.
$$

Since the first two rows are orthogonal we get that $a+b=0$ so $b=-a$. Multiplying the row by a scalar of absolute value 1, we can assume $a$ is real and positive. Since the row has norm 1, we get that $2a^2=1$ so $a=\frac{1}{\sqrt{2}}$, $b=-\frac{1}{\sqrt{2}}$. Since the last two rows are orthogonal, we get $d-e=0$ so $e=d$. As before, we can assume $d$ is real and positive. Since row 1 and row 3 are orthogonal, we get $c+2d=0$ so $c=-d$. Since the norm of the row 3 is 1, we obtain $6d^2=1$ so $d=\frac{1}{\sqrt{6}}$. Thus the matrix is
\begin{equation}\label{equ7.1}
A=\begin{pmatrix}
	\frac{1}{\sqrt{3}}&\frac{1}{\sqrt{3}}&\frac{1}{\sqrt{3}}\\
	0&\frac{1}{\sqrt{2}}&-\frac{1}{\sqrt{2}}\\
	-\frac{2}{\sqrt{6}}&\frac{1}{\sqrt{6}}&\frac{1}{\sqrt{6}}
\end{pmatrix}.
\end{equation}

In case 2, we have $Af=\delta_a$ and $|\suport(f)|=2$. Then $|\suport(A^*\delta_a)|=2$ so a row in $A$ has a zero, which means a column in $A$ has a zero, and we are back to case 1.

In general, of course if we take $f=\delta_1$ then $|\suport(f)|\cdot|\suport(Af)|=3$ so $\mu(A)\leq 3$. Therefore, for all $3\times 3$ matrices that cannot be obtained from the matrix in \eqref{equ7.1} by permutation of columns, permutations of the last two rows, or multiplications of the last two rows by unimodular constants, we have that the uncertainty constant $\mu(A)=3$.

\end{example}

\section{Generalized Walsh vs. classic Walsh and DCT}\label{s6}
In this section we take a look at how the generalized Walsh transforms compare statistically to the Walsh and DCT (discrete cosine) transforms. Recall that the classic Walsh transform corresponds to the (unique) $2\times 2$ unitary matrix $A$ with constant $1/\sqrt{2}$ first row:
\begin{equation}\label{2by2}
A:=\left( \begin {array}{cc} \frac{1}{\sqrt{2}}&\frac{1}{\sqrt{2}}
\\ \noalign{\medskip} \frac{1}{\sqrt{2}}& -\frac{1}{\sqrt{2}}\end {array}
 \right)
\end{equation}
We will do this analysis on 1-dimensional signals $X\in\mathbf{R}^n$.
We will implement the following compression scheme  under  a fixed orthogonal transform $T:\mathbf{R}^n\to\mathbf{R}^n$, i.e. $T^{*}T=I$. This scheme is inspired by the so-called "variance criterion" (see e.g. \cite{ AhRa})  however we do not consider classes of  signals and covariance matrices built from expected values. Instead we use the straightforward  'covariance' matrices below. In the following $v^{t}$ denotes a column vector (the transpose of row vector $v$).
\begin{itemize}
\item Fix an integer $M<n$ ;
\item For a column vector $X=(x_1,x_2,\dots,x_n)^t$ consider $Y=TX=(y_1,y_2,\dots,y_n)^t$  in $\mathbf{R}^n$ ;
\item Calculate the matrix $\text{cov}(Y):=YY^t$ ; \\
From the diagonal pick the highest $M$ 'variances' $\sigma_{i_1,i_1}$, $\sigma_{i_2,i_2}$,...,$\sigma_{i_M,i_M}$ ; \\
Form the compressed signal $\tilde{Y}:=(0,..,0,y_{_{i_1}},0,..,y_{_{i_2}},\dots,0, y_{_{i_M}},0,..0)$ ;
\item $\tilde{X}:=T^{-1}(\tilde{Y})=T^*(\tilde{Y})$ is an approximation of $X$.\\
\item Graph the (normalized) variance distribution
$$k\to \frac{\sigma_{i_k, i_k}}{\text{Trace}(YY^t)}$$
to visualize how efficient is $T$ when $k$ components are kept ($n-k$ removed) corresponding to the highest $k$ variances.
\end{itemize}
The reason this scheme should work is  based on the fact that the minimum error is achieved  when the transform matrix is made of the eigenvectors of $\text{cov}(X)$ i.e. $\text{cov}(Y)=T\text{cov}(X)T^*$ is a diagonal matrix with entries the eigenvalues of $\text{cov}(X)$. We prove this result below by adapting ideas from \cite{AhRa} pp.201-202. We do not have to deal with the expected value operator in the case of a fixed signal. Still,  keeping the highest variances achieves compression. The error function we wish to minimize will  be expressed in simple scalar products.

\begin{proposition} Let $X\in\mathbf{R}^n$ be a  fixed vector and $M< n$ a positive integer. For any orthogonal transform $T:\mathbf{R}^n\to\mathbf{R}^n$ consider the vectors $Y:=TX$ and $\tilde{Y}:=(y_1,y_2,...,y_M, 0,...0)^t$,  and define the error   $E_T:=\|X-T^*\tilde{Y}\|^2$. Then:
\begin{enumerate}
	\item $\text{cov}(Y)=T\text{cov}(X)T^*$;
	\item $\min\{ E_T \text{ }| \text{ } T^*T=I \}$ is attained when $T^*$ is the transform whose columns   $v_1^t, v_2^t, ..., v_n^t$  are the eigenvectors of $\text{cov}(X)$. Hence $\text{cov}(Y)$ is a diagonal matrix.
\end{enumerate}
\end{proposition}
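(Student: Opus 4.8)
The plan is to establish (i) by a direct computation and then use it to reduce (ii) to a statement about which orthogonal transform diagonalizes the rank-one matrix $\covol(X) = XX^t$ while placing its unique nonzero eigenvalue among the first $M$ coordinates. For (i), I would simply write $\covol(Y) = YY^t = (TX)(TX)^t = T(XX^t)T^t = T\covol(X)T^*$, using $T^t = T^*$ since $T$ is real orthogonal; this is one line and needs no cleverness.

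For (ii), the key observation is that the error $E_T = \|X - T^*\tilde Y\|^2$ can be rewritten in terms of $Y = TX$: since $T^*$ is an isometry, $E_T = \|T^*Y - T^*\tilde Y\|^2 = \|Y - \tilde Y\|^2 = \sum_{k=M+1}^n y_k^2 = \|Y\|^2 - \sum_{k=1}^M y_k^2 = \|X\|^2 - \sum_{k=1}^M y_k^2$. So minimizing $E_T$ over orthogonal $T$ is the same as \emph{maximizing} $\sum_{k=1}^M y_k^2 = \sum_{k=1}^M \langle v_k, X\rangle^2$, where $v_1^t,\dots,v_n^t$ are the columns of $T^*$, i.e.\ $\{v_k\}$ is an orthonormal basis of $\mathbf{R}^n$. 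Equivalently, if $P$ denotes the orthogonal projection onto $\mathrm{span}\{v_1,\dots,v_M\}$, we are maximizing $\|PX\|^2$ over all $M$-dimensional coordinate subspaces spanned by an orthonormal set; this is maximized (and equals $\|X\|^2$) precisely by choosing $v_1 = X/\|X\|$ together with any orthonormal completion. Since $\covol(X) = XX^t$ has $X$ as an eigenvector (eigenvalue $\|X\|^2$) and every vector orthogonal to $X$ as an eigenvector (eigenvalue $0$), such a choice of $\{v_k\}$ is exactly an eigenbasis of $\covol(X)$, and then by part (i) $\covol(Y) = T\covol(X)T^*$ is diagonal with entries the eigenvalues of $\covol(X)$. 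This gives the claimed minimizer.

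I do not anticipate a serious obstacle here; the mild subtlety is that the minimizer is not unique (any orthonormal completion of $X/\|X\|$ works, and the first $M$ eigenvectors can be permuted among themselves), so the statement should be read as asserting that \emph{an} eigenbasis attains the minimum rather than that it is forced. The only point requiring a word of care is the reduction $E_T = \|Y - \tilde Y\|^2$, which uses that $T^*$ preserves norms and that $\tilde Y$ is obtained from $Y$ by zeroing the last $n-M$ coordinates — here I am following the ordering convention built into the definition of $\tilde Y$ in the statement, where the retained coordinates are the first $M$; if instead one retains the $M$ largest-variance coordinates as in the compression scheme above, the same argument applies verbatim after a coordinate permutation, since the diagonal of $\covol(Y)$ records exactly the values $y_k^2$.
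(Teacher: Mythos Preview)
Your proof is correct. Part (i) and the reduction $E_T=\sum_{k=M+1}^n y_k^2=\sum_{k=M+1}^n\langle v_k,X\rangle^2$ match the paper exactly. For part (ii), however, you take a different route: you rewrite the problem as maximizing $\|PX\|^2$ over rank-$M$ orthogonal projections, observe that the maximum $\|X\|^2$ is attained precisely when $X$ lies in the range of $P$, and then note that any orthonormal basis beginning with $X/\|X\|$ is automatically an eigenbasis of the rank-one matrix $XX^t$. The paper instead sets up a Lagrange-multiplier problem, minimizing $\sum_{i=M+1}^n\langle v_i,X\rangle^2$ subject to $\|v_i\|=1$ (dropping the orthogonality constraints), and computes that the critical-point equations are $\textup{cov}(X)v_i^t=\lambda_i v_i^t$. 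Your argument is more elementary and more complete for this specific rank-one situation: it actually exhibits the minimizer and verifies optimality, whereas the Lagrange approach only yields a necessary first-order condition and relies on the reader to accept that the minimum is among those critical points. The Lagrange formulation, on the other hand, is the one that generalizes verbatim to the genuine Karhunen--Loeve setting where $\textup{cov}(X)$ is a full-rank expectation rather than $XX^t$; your direct argument leans on the rank-one structure and would need to be replaced by a Rayleigh-quotient or Courant--Fischer argument in that broader context. Your closing remark about non-uniqueness of the minimizer is apt and is left implicit in the paper.
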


\begin{proof} (i) The formula follows easily from the definition $\text{cov}(Y)=TX(TX)^t$ and $T^t=T^* $ because $T$ has real valued entries. \\

(ii) For an arbitrary $T$, let $v_i^t$ be the column vectors of $T^*$. If $Y=TX=(y_1,\dots,y_n)^t$, then $X=T^*Y=\sum_{i=1}^ny_iv_i^t$. Because $T$ is orthogonal

$$E_T=\|X-T^*\tilde Y\|^2=\|T^*Y-T^*\tilde Y\|^2=\|Y-\tilde Y\|^2=\sum_{i=M+1}^ny_i^2=\sum_{i=M+1}^n\ip{v_i^t}{X}^2.$$
The minimum is subject to the constraints $v_iv_i^t=1$ (we discard the orthogonality conditions $v_iv_j^t=0$ for $i\neq j$, because, as we will see, the minimum, under these conditions, will be realized for eigenvectors of cov(X), which can be chosen to be orthogonal).

With $\lambda_i$ being the Lagrange multipliers we look for critical points of the function
$$\tilde E(v)=\sum_{i=M+1}^n\ip{v_i^t}{X}^2-\sum_{i=M+1}^n\lambda_i(v_iv_i^t-1),$$
in variables $(v_i)_{i=1,\dots,n}$.

Note that, for row vectors $v$, if $f(v)=\ip{v^t}{X}^2$, then $\nabla_v(f)=2\ip{v^t}{X}X=2(XX^t)v^t=2\textup{cov}(X)v^t$. Also $\nabla_v (vv^t)=2v^t$. Then $\nabla_{v_i}(\tilde E)=0$ implies $\textup{cov}(X)v_i^t=XX^tv_i^t=\lambda_i v_i^t$.

In conclusion, the minimum is obtained when the column vectors of $T^*$ are eigenvectors for $\textup{cov}(X)$, and therefore $\textup{cov}(Y)=T\textup{cov}(X)T^*$ is a diagonal matrix in the canonical basis.
\end{proof}

By deleting a component $y_j$ from the transformed signal we mean setting $y_j=0$. If $T^*$ diagonalizes $\text{cov}(X)$  the removal of the $j^{\text{th}}$ component will result in a mean square error increase by the  corresponding eigenvalue $\lambda_j$. To achieve compression it makes sense to discard the lowest $n-M$ eigenvalues. This optimum transform (Karhunen-Loeve) is not cheap to implement, however. The implementation of Walsh and other transforms is more suitable (we also  analyze  DCT by the same method). For these transforms the covariance matrix has non zero off-diagonal terms, nevertheless by discarding the lowest values from its diagonal  still produces compression, as seen in the following examples.
\begin{example}
We illustrate the above principle for $T_1=A^{\otimes 6}$ and $T_2=B^{\otimes 6}$ with $3\times 3$ matrices A and B below (when entries contain decimals (for the matrix $B$), the matrices involved are 'almost' unitary because our Maple code approximates the solutions of the orthogonality equations). The signal $X$ is a vector of length $3^6$ with values in $[0,1]$ (the vector is a column in a black and white  image). We set to zero  $90\%$  of  $T_iX$ components and  keep $70$ components out of $729$ that correspond to the highest variances.  In Figure \ref{warp33}, the signal $X$ is shown with its approximations where the  mean square  error is $0.84$ under transform $T_1$ and $0.95$ under $T_2$. We also graph the variance of both transforms with respect to number of components ($x-$axis). The area under each curve for a given number of components indicates the energy contained in those components : e.g. with respect to component interval $[1,20]$ transform $T_1$ performs better than $T_2$ .  
\begin{equation}\label{3by3}
A:=\left( \begin {array}{ccc} \frac{1}{\sqrt{3}}&\frac{1}{\sqrt{3}}&\frac{1}{\sqrt{3}}\\
\noalign{\medskip} 0 & \frac{1}{\sqrt{2}}&  -\frac{1}{\sqrt{2}}\\
 \noalign{\medskip}   -\frac{2}{\sqrt{6}} & \frac{1}{\sqrt{6}} & \frac{1}{\sqrt{6}}
\end {array} \right),\quad\quad
B:=\left( \begin {array}{ccc}  \frac{1}{\sqrt{3}}&\frac{1}{\sqrt{3}}&\frac{1}{\sqrt{3}} \\
\noalign{\medskip}- 0.2&- 0.58& 0.78
\\ \noalign{\medskip}- 0.79& 0.57& 0.22
\end {array} \right)
\end{equation}

\begin{figure}

    \includegraphics[width=48mm]{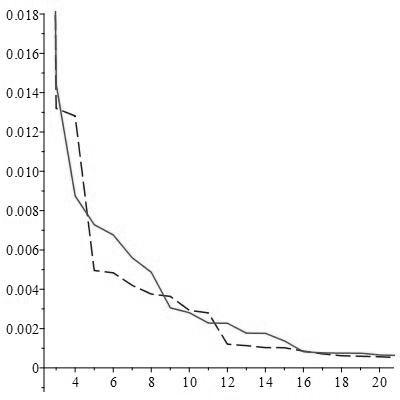}
		\includegraphics[width=48mm]{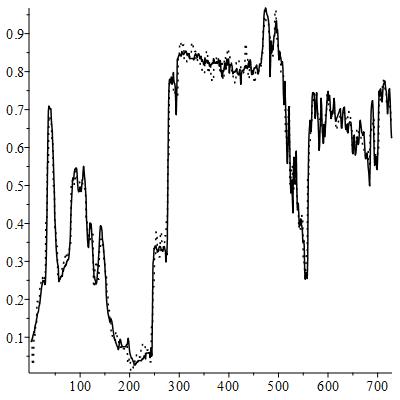}
    \includegraphics[width=48mm]{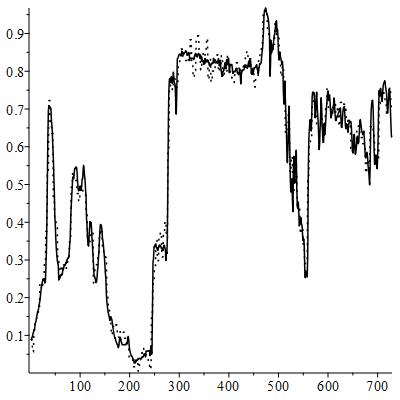}
\caption{Variance distribution ( left) for $T_1$-solid curve packs more energy than $T_2$ -dash. Approximation of a signal with $T_1$ (middle) and $T_2$(right)}
\label{warp33}
\end{figure}
\end{example}

\begin{example}
We compare now  the DCT, the classic  Walsh and a generalized Walsh transform based on the 4 by 4 matrix
\begin{equation}\label{4by4}
\left( \begin {array}{cccc} \frac{1}{2} & \frac{1}{2} & \frac{1}{2} & \frac{1}{2}\\
\noalign{\medskip}  \frac{1}{\sqrt{2}} & -\frac{1}{\sqrt{2}}   & 0  & 0\\
 \noalign{\medskip}  0  & 0 & \frac{1}{\sqrt{2}}&-\frac{1}{\sqrt{2}}\\
 \noalign{\medskip}   \frac{1}{2} & \frac{1}{2}  & -\frac{1}{2 }&-\frac{1}{2}
\end {array} \right)
\end{equation}
In this example we choose a  $2^8$ length vector with high variation defined as follows
$$
X(i):=\left\{ \begin{array}{cc}
\frac{i}{3i+1},&\text{ if }i\text{}|\text{}9\\
\frac{i}{i+1},&\text{ otherwise}
\end{array}\right.
$$
In Figure \ref{wardct} the variance distribution of the transforms (normalized variances in decreasing order of magnitude with respect to transform components) is depicted in order to check that the best error is obtained for the transform whose variance is higher. We keep $45\%$ of the $TX$ components and replace the rest with zeros. The best approximation (see Figure \ref{warpd42}) is given by DCT with error $0.01$, followed by classic Walsh with error $0.08 $ and the generalized Walsh with error $0.33$.

\begin{figure}
\centering
    \includegraphics[width=48mm]{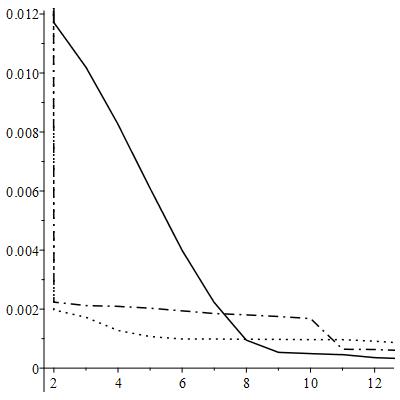}
\caption{Variance distribution: DCT(solid), Walsh(dash) and generalized $4\times 4$ Walsh(dot) }
\label{wardct}
\end{figure}

\begin{figure}
 \includegraphics[width=45mm]{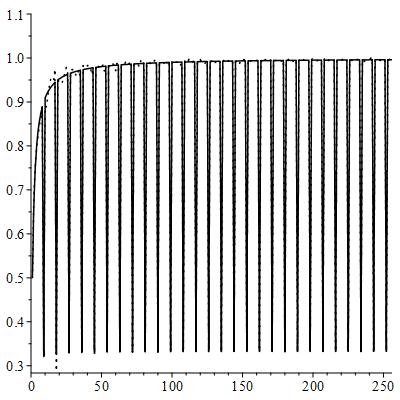}
   \includegraphics[width=45mm]{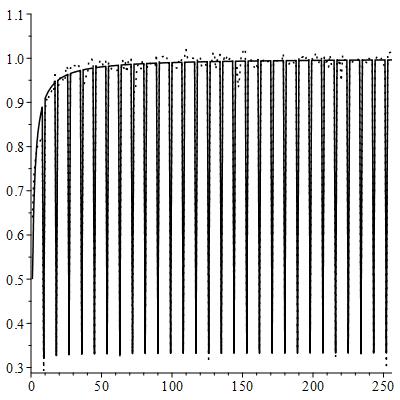}
 \includegraphics[width=45mm]{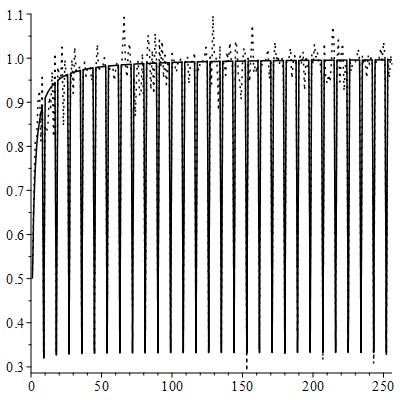}
\label{warpd42}
\caption{Approximation with DCT(left), classic Walsh(middle) and generalized 4 by 4 Walsh (right) with $45\%$ of components }
\end{figure}

\begin{figure}
 \includegraphics[width=48mm]{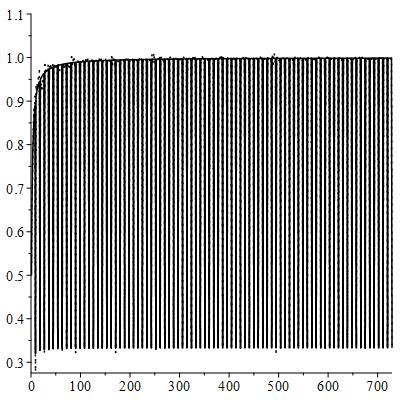}
   \includegraphics[width=48mm]{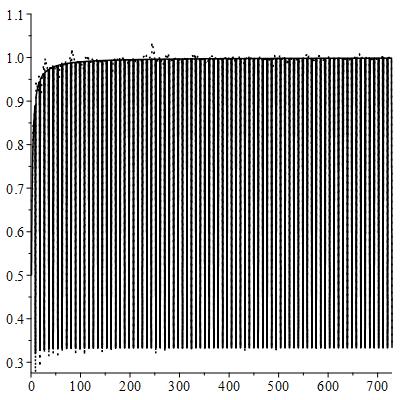}
\label{hvab}
\caption{Approximation of a high variation signal with two generalized 3 by 3 Walsh transforms. $90\%$ components are set to $0$. Recovery errors $0.02$ and $0.08$ }
\end{figure}

It seems that high variation signals are treated better by DCT than (generalized) Walsh. However with the same signal now extended to $729$ components  $X_i, i=1...3^6$ , the generalized  Walsh transforms associated with the 3 by 3 matrices $A$ and $B$ above produce  compression strikingly different from DCT,  see Figure \ref{hvab}.  The errors in recovering $X$ after removing again $90\%$ from $T_iX$ with $T_1=A^{\otimes 6}$ and $T_2=B^{\otimes 6}$ are $0.02$ and $0.08$ respectively. Thus $T_1$ is more efficient than $T_2$ however both are stronger than DCT, the classic Walsh and the generalized Walsh (associated to the 4 by 4 matrix above ) when $90\%$ of $X_i, i=1..2^8$ components are zeroed out. In this case the latter transforms produce errors  $ 0.38$, $5.25$ and $ 6.43$ respectively.
\end{example}

The phenomenon in the example above suggests the following question: given the dimension of the matrix $N$ and a prescribed signal $X$ in $\mathbb R^{N^p}$, which $N\times N$ unitary matrix $A$ will produce a compression-efficient generalized Walsh transform?

\begin{acknowledgements}
This work was partially supported by a grant from the Simons Foundation (\#228539 to Dorin Dutkay).

\end{acknowledgements}

\bibliographystyle{alpha}	
\bibliography{eframes}
\end{document}